\documentclass[10pt]{article}

\usepackage[T1]{fontenc}
\usepackage{lmodern}
\usepackage[margin=0.9in]{geometry}
\usepackage{amsmath,amssymb,amsthm,mathtools}
\usepackage{enumitem}
\usepackage[colorlinks=true,linkcolor=blue,citecolor=blue,urlcolor=blue]{hyperref}
\usepackage{pgfplots}
\pgfplotsset{compat=1.18}
\title{Extremal Chowla Sets and Their Linear Analogues: A Human--AI Mathematical Investigation Using Co-Scientist}
\date{}

\newtheorem{theorem}{Theorem}[section]
\newtheorem{proposition}[theorem]{Proposition}
\newtheorem{lemma}[theorem]{Lemma}
\newtheorem{corollary}[theorem]{Corollary}
\newtheorem{question}[theorem]{Question}
\theoremstyle{definition}
\newtheorem{definition}[theorem]{Definition}
\newtheorem{remark}[theorem]{Remark}

\newcommand{\F}{\mathbb{F}}
\newcommand{\Z}{\mathbb{Z}}
\newcommand{\N}{\mathbb{N}}
\newcommand{\Ccal}{\mathcal{C}}
\newcommand{\ord}{\operatorname{ord}}
\newcommand{\Exp}{\operatorname{exp}}
\newcommand{\Gal}{\operatorname{Gal}}
\newcommand{\lcm}{\operatorname{lcm}}
\newcommand{\gbinom}[3]{\genfrac{[}{]}{0pt}{}{#1}{#2}_{#3}}
\newcommand{\CoS}{Co-Scientist}

\usepackage{authblk}
\usepackage{array}
\usepackage{tabularx}
\usepackage{longtable}
\usepackage{booktabs}
\usepackage{xcolor}
\usepackage{tikz}
\usepackage{tcolorbox}
\tcbuselibrary{breakable}
\usetikzlibrary{positioning, arrows.meta}

\author[1]{Mohsen~Aliabadi}
\author[1]{Keith~Driscoll}
\author[1]{Elliot~Krop}
\author[2]{Petar~Sirkovic}
\author[1]{Everett~Sullivan}
\author[3]{Elahe~Vedadi}

\affil[1]{Clayton State University}
\affil[2]{Google Cloud AI Research}
\affil[3]{Google DeepMind}

\begin{document}

\maketitle

\begin{abstract}
We study an extremal invariant arising from Chowla-type order conditions in
finite groups. A nonempty subset \(S\) of a finite group \(G\) is called a
Chowla set if every element of \(S\) has order greater than \(|S|\), and
\(\Ccal(G)\) denotes the largest possible cardinality of such a set. We show
that \(\Ccal(G)\) is determined by the order distribution of \(G\). For cyclic
groups, this yields an exact divisor formula and a criterion for
\(\Ccal(\mathbb Z/n\mathbb Z)=\varphi(n)\). We prove
\[
 \liminf_{n\to\infty}\frac{\Ccal(\mathbb Z/n\mathbb Z)}{\varphi(n)}=1,
 \qquad
 \limsup_{n\to\infty}\frac{\Ccal(\mathbb Z/n\mathbb Z)}{\varphi(n)}=\infty,
\]
and determine the corresponding lower and upper limits after normalization by
\(n\). For finite abelian groups, we obtain an explicit formula from the
invariant-factor decomposition and a closed form for abelian \(p\)-groups.

We also study a linear version for finite field extensions. A nonzero
\(K\)-subspace \(A\subseteq L\) is called a Chowla subspace if
\([K(a):K]>\dim_K A\) for every \(0\ne a\in A\). This dimension-dependent
condition does not, in general, require every nonzero element of \(A\) to
generate \(L\). If \(L/K\) is finite separable, however, we prove the exact
extremal formula
\[
 \Ccal(L/K)=[L:K]-d_{\max}(L/K),
\]
where \(d_{\max}(L/K)\) is the largest degree of a proper intermediate field.
For finite fields, a normal-basis construction gives a direct proof in every
degree. We further obtain explicit counting formulas in the prime-power chain
case and in the first non-chain case of degree \(pq\).

The mathematical results were developed through a human--AI collaboration.
A lightweight, reasoning-oriented configuration of \CoS{} assisted with
candidate reductions, examples, proof strategies, and draft arguments.  The
human authors selected the problem, designed the sequence of mathematical
questions, checked every argument, repaired gaps, supplied missing hypotheses,
and wrote the final proofs.  We document this expert-guided workflow as a case
study in AI-assisted mathematical discovery; the AI systems were not used as
formal proof verifiers.
\end{abstract}

\noindent\textbf{2020 Mathematics Subject Classification.} 05D05, 11B75, 11T06, 20K01, 12E20.

\noindent\textbf{Keywords.} Chowla set, element order, Euler totient function, finite abelian group, finite field, Gaussian binomial coefficient, intermediate field.

\section{Introduction}

The classical Cauchy--Davenport theorem asserts that if \(p\) is prime and
\(A,B\subseteq \mathbb Z/p\mathbb Z\) are nonempty, then
\[
   |A+B|\ge \min\{p, |A|+|B|-1\},
\]
where \(A+B=\{a+b:a\in A,\ b\in B\}\).  The theorem was proved by Cauchy
and later rediscovered by Davenport \cite{Cauchy1813,Davenport1935}.  Its
prime-modulus hypothesis is essential in this form.  Indeed, when the modulus
is composite, the group \(\mathbb Z/n\mathbb Z\) contains nonzero elements of
proper additive order, and these elements may create periodic obstructions to
Cauchy--Davenport type growth.

Chowla proved a composite-modulus analogue which removes these obstructions by
imposing an order condition on one of the summand sets \cite{Chowla1935}.  Let \(n\ge 2\), and let
\(A,B\subseteq \mathbb Z/n\mathbb Z\) be nonempty.  Suppose that \(0\in B\) and
that
\[
   \gcd(b,n)=1 \qquad \text{for every } b\in B\setminus\{0\}.
\]
Equivalently, every nonzero element of \(B\) has additive order \(n\) in
\(\mathbb Z/n\mathbb Z\).  Then Chowla's theorem gives
\[
   |A+B|\ge \min\{n, |A|+|B|-1\}.
\]
Thus Chowla's theorem extends the Cauchy--Davenport lower bound from prime
moduli to arbitrary moduli, provided the nonzero elements of one summand set
generate the whole cyclic group.

Subsequent work developed group-theoretic versions in which the full-order
condition is replaced by weaker lower bounds on element orders.  In particular,
Hamidoune's isoperimetric method studies additive expansion under hypotheses
requiring the relevant nonidentity elements to have order at least the size of
the set being considered \cite{Hamidoune1996}.  The notion of a Chowla set
considered in this paper is obtained by isolating this order condition from
that broader additive-combinatorial setting.
 In all of these settings, the common role of the
order hypothesis is to rule out small cyclic subgroups and the subgroup-periodic
examples that obstruct Cauchy--Davenport type lower bounds.

Linear analogues of classical addition theorems for products of vector
subspaces were developed by Hou, Leung, and Xiang
\cite{HouLeungXiang2002}. Linear Kneser and Vosper theorems were
subsequently refined by Bachoc, Serra, and Z\'emor
\cite{BachocSerraZemor2018,BachocSerraZemor2017}. These results provide a
natural additive-combinatorial setting for replacing group-theoretic order
conditions by degree conditions in field extensions.

The linear invariant considered here is dimension dependent. If
\(\dim_K A=m\), the Chowla condition excludes only those nonzero elements
whose generated subextensions have degree at most \(m\). Thus a Chowla
subspace may contain elements generating proper intermediate fields of degree
larger than \(m\). At the extremal dimension, the situation becomes more
rigid, and this leads to an exact formula in terms of the largest proper
intermediate field.

\subsection{Main results}
The purpose of this paper is to isolate and study the extremal content of
Chowla-type order conditions.

For a finite group \(G\), let
\[
 N_G(m)=|\{x\in G:\ord(x)>m\}|.
\]
We first prove
\[
 \Ccal(G)=\max\{m\in\mathbb N:N_G(m)\ge m\},
\]
so both existence and enumeration of Chowla sets are determined by the order
distribution of \(G\). For cyclic groups, this becomes
\[
 \Ccal(\mathbb Z/n\mathbb Z)
 =\max\left\{m\in\mathbb N:
 \sum_{\substack{d\mid n\\ d>m}}\varphi(d)\ge m\right\}.
\]
If \(p_1\) is the least prime divisor of \(n\), then
\[
 \Ccal(\mathbb Z/n\mathbb Z)=\varphi(n)
 \quad\Longleftrightarrow\quad
 \varphi(n)\ge \frac{n}{p_1}-1.
\]
We also establish
\[
 \liminf_{n\to\infty}\frac{\Ccal(\mathbb Z/n\mathbb Z)}{\varphi(n)}=1,
 \qquad
 \limsup_{n\to\infty}\frac{\Ccal(\mathbb Z/n\mathbb Z)}{\varphi(n)}=\infty,
\]
and
\[
 \liminf_{n\to\infty}\frac{\Ccal(\mathbb Z/n\mathbb Z)}{n}=0,
 \qquad
 \limsup_{n\to\infty}\frac{\Ccal(\mathbb Z/n\mathbb Z)}{n}=1.
\]
Moreover, every fixed inequality
\(\Ccal(\mathbb Z/n\mathbb Z)\ge M\varphi(n)\) holds on a set of positive
lower asymptotic density.

For a finite abelian group
\(G\cong C_{n_1}\oplus\cdots\oplus C_{n_r}\), with
\(n_1\mid\cdots\mid n_r\), M\"obius inversion gives an explicit formula for
the number of elements of each exact order and hence for \(\Ccal(G)\). In
particular, if \(G\) is an abelian \(p\)-group of exponent \(p^\ell\), then
\[
 \Ccal(G)=
 \begin{cases}
 p^\ell-p^{\ell-1},&\text{if \(G\) is cyclic},\\
 p^\ell-1,&\text{if \(G\) is noncyclic}.
 \end{cases}
\]

For a finite extension \(L/K\), we define Chowla subspaces by the condition
\([K(a):K]>\dim_K A\) for all \(0\ne a\in A\). Writing
\[
 d_{\max}(L/K)=
 \max\{[E:K]:K\subseteq E\subsetneq L\},
\]
we prove the upper bound
\[
 \Ccal(L/K)\le [L:K]-d_{\max}(L/K)
\]
for every finite extension. We then prove the exact formula
\[
 \boxed{\Ccal(L/K)=[L:K]-d_{\max}(L/K)}
\]
for every finite separable extension. The infinite-base-field case follows
from a direct finite-union avoidance construction, while the finite-field case
is proved by an explicit normal-basis construction.

Finally, we count Chowla subspaces over finite fields. When \(n=p^r\), the
intermediate fields form a chain, and the number of \(m\)-dimensional Chowla
subspaces is a single Gaussian-binomial expression. When \(n=pq\) with
\(p<q\) prime, two incomparable proper subfields occur; a subspace-lattice
M\"obius inversion gives an explicit inclusion--exclusion formula.

\section{Chowla sets in finite groups}

Throughout this section and the next, every finite group \(G\) is assumed to
satisfy \(|G|>1\).  This convention ensures that the family of Chowla sets is
nonempty and that the maxima below are well defined.

We write \(\ord(x)\) for the order of an element \(x\) in a finite group.  The exponent of a finite group \(G\), denoted \(\Exp(G)\), is the least positive integer \(e\) such that \(x^e=1\) for every \(x\in G\).  Equivalently, it is the least common multiple of the orders of the elements of \(G\).

\begin{definition}
Let \(G\) be a finite group.  A nonempty subset \(S\subseteq G\) is called a \emph{Chowla set} if
\[
 \ord(x)>|S| \qquad \text{for every }x\in S.
\]
Define
\[
 \Ccal(G)=\max\{|S|:S\subseteq G \text{ is a Chowla set}\}.
\]
For \(m\ge 0\), set
\[
 \Omega_G(m)=\{x\in G:\ord(x)>m\},
 \qquad
 N_G(m)=|\Omega_G(m)|.
\]
\end{definition}

The first observation reduces the invariant to the order distribution of the group.  In this paper, the order distribution of a finite group means the sequence of integers \(a_G(d)=|\{x\in G:\ord(x)=d\}|\), indexed by the positive divisors of \(\Exp(G)\).

\begin{proposition}\label{prop:order-distribution}\label{thm:order-distribution}
For every finite group \(G\),
\[
 \Ccal(G)=\max\{m\in\N:N_G(m)\ge m\}.
\]
Moreover, for each \(m\ge 1\), the number of Chowla sets of \(G\) of cardinality \(m\) is
\[
 \binom{N_G(m)}{m}.
\]
\end{proposition}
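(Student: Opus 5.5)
The plan is to prove the counting statement first, since the formula for \(\Ccal(G)\) follows from it. Fix \(m\ge 1\). By definition, a subset \(S\subseteq G\) with \(|S|=m\) is a Chowla set exactly when \(\ord(x)>m\) for every \(x\in S\). This is the same as saying \(S\subseteq \Omega_G(m)\). So the Chowla sets of cardinality \(m\) are precisely the \(m\)-element subsets of \(\Omega_G(m)\), and there are \(\binom{N_G(m)}{m}\) of them. The only point to note is that the condition depends on \(|S|\) alone, which here is fixed at \(m\). Hence the order condition becomes a membership condition in a fixed set, and no interaction between elements of \(S\) arises.

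For the extremal formula, I would read it off from the count. A Chowla set of size \(m\ge 1\) exists if and only if \(\binom{N_G(m)}{m}>0\), that is, if and only if \(N_G(m)\ge m\). Therefore
\[
\Ccal(G)=\max\{m\ge 1: N_G(m)\ge m\}.
\]

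It remains to check that this maximum is well defined and agrees with the stated \(\max\) over \(\N\).
\begin{itemize}
\item \emph{Nonempty.} The set contains \(m=1\): since \(|G|>1\), some element has order at least \(2>1\), so \(N_G(1)\ge 1\).
\item \emph{Bounded.} For \(m\ge |G|\), every element has order at most \(|G|\le m\), so \(N_G(m)=0<m\). Hence the set is bounded by \(|G|-1\).
\item \emph{Convention on \(\N\).} If \(\N\) includes \(0\), then \(m=0\) trivially satisfies \(N_G(0)\ge 0\). It does not affect the maximum, because \(m=1\) is already admissible.
\end{itemize}

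There is no real obstacle here. The only care needed is this bookkeeping: the standing assumption \(|G|>1\) guarantees the maximum is attained at some \(m\ge 1\), and the possible inclusion of \(0\) in \(\N\) is harmless.
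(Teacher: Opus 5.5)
Your proof is correct and follows the paper's argument: both observe that an \(m\)-element subset is Chowla exactly when it lies in \(\Omega_G(m)\), which gives both the existence criterion \(N_G(m)\ge m\) and the count \(\binom{N_G(m)}{m}\). Your extra check that the maximum is well defined, using the standing assumption \(|G|>1\), is correct; the paper leaves it implicit.
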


\begin{proof}
A subset \(S\subseteq G\) with \(|S|=m\) is Chowla if and only if every element of \(S\) lies in \(\Omega_G(m)\).  Hence such a subset exists if and only if \(N_G(m)\ge m\).  The counting statement follows from the same observation, since the Chowla sets of cardinality \(m\) are precisely the \(m\)-element subsets of \(\Omega_G(m)\).
\end{proof}

\begin{corollary}\label{cor:max-order-criterion}\label{prop:max-order-criterion}
Let \(G\) be a finite group and let \(e=\Exp(G)\).  If \(G\) contains at least \(e-1\) elements of order \(e\), then
\[
 \Ccal(G)=e-1.
\]
\end{corollary}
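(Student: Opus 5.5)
We will deduce the statement directly from Proposition~\ref{prop:order-distribution}, which gives \(\Ccal(G)=\max\{m\in\N:N_G(m)\ge m\}\). The proof splits into a lower bound \(\Ccal(G)\ge e-1\) and an upper bound \(\Ccal(G)\le e-1\), each obtained by evaluating \(N_G(m)\) at a suitable value of \(m\).

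\emph{Lower bound.} Take \(m=e-1\). Every element of order \(e\) satisfies \(\ord(x)=e>e-1\), so it lies in \(\Omega_G(e-1)\). By hypothesis there are at least \(e-1\) such elements, hence \(N_G(e-1)\ge e-1\). Note that \(|G|>1\) forces \(e\ge 2\), so \(e-1\ge 1\) is a legitimate positive value of \(m\). Proposition~\ref{prop:order-distribution} then gives \(\Ccal(G)\ge e-1\). Concretely, any \(e-1\) elements of order \(e\) form a Chowla set.

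\emph{Upper bound.} Let \(m\ge e\). Every element \(x\in G\) satisfies \(\ord(x)\mid e\), so \(\ord(x)\le e\le m\). Hence \(\Omega_G(m)=\emptyset\) and \(N_G(m)=0<m\). Therefore no \(m\ge e\) belongs to the set whose maximum defines \(\Ccal(G)\), and so \(\Ccal(G)\le e-1\).

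Combining the two bounds gives \(\Ccal(G)=e-1\). The only point requiring care is the upper bound, specifically the fact that element orders are bounded by the exponent. This holds because each order divides \(e\), by the definition of the exponent as the least common multiple of the element orders.
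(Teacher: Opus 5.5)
Your proof is correct and follows the paper's argument: the upper bound comes from every element order being at most \(e\), and the lower bound from taking any \(e-1\) elements of order \(e\). Phrasing it through \(N_G(m)\) and Proposition~\ref{prop:order-distribution} is only a repackaging, and your check that \(e\ge 2\) (so the chosen set is nonempty) is a detail the paper leaves implicit.
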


\begin{proof}
No element of \(G\) has order greater than \(e\), so no Chowla set can have cardinality at least \(e\).  Hence \(\Ccal(G)\le e-1\).  If \(G\) has at least \(e-1\) elements of order \(e\), then any choice of \(e-1\) such elements is a Chowla set.  Thus \(\Ccal(G)\ge e-1\).
\end{proof}

\subsection{Cyclic groups}

We now specialize to \(\Z/n\Z\).  The order distribution is explicit, so Proposition~\ref{prop:order-distribution} becomes a divisor formula.

\begin{theorem}\label{thm:cyclic-exact}
For every integer \(n\ge 2\),
\[
 \Ccal(\Z/n\Z)=\max\left\{m\in\N:\sum_{\substack{d\mid n\\ d>m}}\varphi(d)\ge m\right\}.
\]
In particular,
\[
 \varphi(n)\le \Ccal(\Z/n\Z)\le n-1.
\]
\end{theorem}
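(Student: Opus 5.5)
The plan is to specialize Proposition~\ref{prop:order-distribution} to \(G=\Z/n\Z\) by computing \(N_G(m)\) explicitly. The only input needed is the order distribution of a cyclic group. For each divisor \(d\mid n\), the elements of \(\Z/n\Z\) of additive order exactly \(d\) are those of the cyclic subgroup \((n/d)\Z/n\Z\cong\Z/d\Z\) that generate it. There are \(\varphi(d)\) of them. Every element has order dividing \(n\), so partitioning \(\Omega_G(m)\) by exact order gives
\[
 N_{\Z/n\Z}(m)=\sum_{\substack{d\mid n\\ d>m}}\varphi(d).
\]
Substituting this into \(\Ccal(G)=\max\{m\in\N:N_G(m)\ge m\}\) yields the displayed divisor formula immediately.

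For the two bounds I will argue directly from the formula.

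\emph{Upper bound.} If \(m\ge n\), then no divisor \(d\) of \(n\) satisfies \(d>m\). The sum is then empty, so it equals \(0<m\). Hence every admissible \(m\) is at most \(n-1\). Alternatively, the upper bound is the easy half of Corollary~\ref{cor:max-order-criterion}'s argument, using \(\Exp(\Z/n\Z)=n\).

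\emph{Lower bound.} Take \(m=\varphi(n)\). Since \(n\ge2\), we have \(\varphi(n)\le n-1<n\), so the term \(d=n\) appears in the sum. Therefore \(N(\varphi(n))\ge\varphi(n)\), which makes \(m=\varphi(n)\) admissible, and hence \(\Ccal(\Z/n\Z)\ge\varphi(n)\).

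I should also check that the maximum is over a nonempty, bounded set; both facts follow from what was just shown. There is no real obstacle here. The only step needing care is the standard count of \(\varphi(d)\) elements of order \(d\) for each \(d\mid n\), together with the strict inequality \(\varphi(n)<n\) for \(n\ge2\), which is what guarantees that the \(d=n\) term is included.
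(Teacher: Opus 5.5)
Your proposal is correct and follows essentially the paper's proof: count $\varphi(d)$ elements of each order $d\mid n$ to get $N_{\Z/n\Z}(m)=\sum_{d\mid n,\,d>m}\varphi(d)$, then apply Proposition~\ref{prop:order-distribution}. Your two bounds match the paper's reasoning (the generators give a Chowla set of size $\varphi(n)$, and no order exceeds $n$); you simply phrase them through the divisor sum rather than through explicit sets.
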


\begin{proof}
In the cyclic group \(\Z/n\Z\), the number of elements of order \(d\) is \(\varphi(d)\) for each divisor \(d\mid n\).  Hence
\[
 N_{\Z/n\Z}(m)=\sum_{\substack{d\mid n\\ d>m}}\varphi(d).
\]
The formula follows from Proposition~\ref{prop:order-distribution}.  The lower bound follows by taking all elements of order \(n\), and the upper bound follows because no element has order greater than \(n\).
\end{proof}

\begin{proposition}[Threshold form]\label{prop:cyclic-greedy}
Let \(n\ge 2\) and let \(m\) be an integer satisfying
\[
 \sum_{\substack{d\mid n\\ d>m}}\varphi(d)\ge m.
\]
Then there is a Chowla set of \(\Z/n\Z\) of cardinality \(m\) with the following form: for some divisor \(D\mid n\) with \(D>m\), the set contains all elements of order strictly larger than \(D\), exactly
\[
 m-\sum_{\substack{d\mid n\\ d>D}}\varphi(d)
\]
elements of order \(D\), and no element of order smaller than \(D\).  If \(D\) is taken to be the minimum order occurring in the set, this threshold description is unique up to the choice of the required elements of order \(D\).  Consequently, a maximum Chowla set may be chosen in this threshold form.
\end{proposition}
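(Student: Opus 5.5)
We build the set greedily, taking elements of \(\Z/n\Z\) in decreasing order of their orders. List the divisors of \(n\) exceeding \(m\) as \(d_1>d_2>\cdots>d_k>m\), with \(d_1=n\), and set
\[
T(D)=\sum_{\substack{d\mid n\\ d\ge D}}\varphi(d)=N_{\Z/n\Z}(D-1).
\]
By hypothesis \(T(d_k)=\sum_{d\mid n,\,d>m}\varphi(d)\ge m\). Also \(T(d_1)=\varphi(n)\ge 1\), but this alone does not give \(T(d_1)\ge m\).

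Let \(D\) be the \emph{largest} divisor among \(d_1,\dots,d_k\) with \(T(D)\ge m\). It exists because \(T(d_k)\ge m\). Let \(D^+\) be the next divisor of \(n\) above \(D\), if there is one.

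\textbf{Case \(D=n\).} Then \(\sum_{d>D}\varphi(d)=0<m\).

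\textbf{Case \(D<n\).} Then \(D^+\) is some \(d_i>m\). By the maximality of \(D\), \(T(D^+)<m\). But \(T(D^+)=\sum_{d\mid n,\,d>D}\varphi(d)\), since there are no divisors strictly between \(D\) and \(D^+\). Hence \(\sum_{d>D}\varphi(d)<m\).

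In both cases the integer \(r=m-\sum_{d\mid n,\,d>D}\varphi(d)\) satisfies
\[
1\le r\le \varphi(D),
\]
where the upper bound is exactly \(T(D)\ge m\). Because \(\Z/n\Z\) has exactly \(\varphi(D)\) elements of order \(D\), we may choose \(r\) of them.

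Now let \(S\) consist of all elements of order \(>D\) together with these \(r\) elements. Then \(|S|=m\). Every element of \(S\) has order at least \(D>m\), so \(S\) is a Chowla set. This gives the required form.

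\textbf{Uniqueness.} Suppose \(S\) is any set of this threshold form, with \(D\) the minimum order occurring in \(S\). Then \(r\ge 1\) and \(r\le\varphi(D)\), so
\[
\sum_{d>D}\varphi(d)<m\le T(D).
\]
These two inequalities determine \(D\): the function \(D\mapsto T(D)\) is monotone along the divisors, so exactly one divisor \(D\) makes \(m\) cross from above \(\sum_{d>D}\varphi(d)\) to at most \(T(D)\). Once \(D\) is fixed, the elements of order \(>D\) and the count \(r\) are forced. The only freedom left is which \(r\) elements of order \(D\) are chosen.

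\textbf{Maximum Chowla sets.} Take \(m=\Ccal(\Z/n\Z)\). By Theorem~\ref{thm:cyclic-exact}, this \(m\) satisfies the hypothesis, so a maximum Chowla set exists in threshold form.

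The main obstacle is the boundary handling, namely that \(D\) may equal \(n\) and that \(D>m\) must be ensured. Both are settled by choosing \(D\) among the divisors exceeding \(m\) and by the case split above.
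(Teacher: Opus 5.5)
Your proposal is correct and follows essentially the same greedy argument as the paper. Your threshold \(D\), the largest divisor with \(T(D)\ge m\), is exactly the order of the \(m\)-th element when the elements of order \(>m\) are listed in nonincreasing order of their orders, which is how the paper defines \(D\); your uniqueness step via \(\sum_{d\mid n,\,d>D}\varphi(d)<m\le T(D)\) is a slightly more explicit version of the paper's remark, and in fact shows that \(D\) is determined by \(m\) alone.
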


\begin{proof}
List all elements of \(\Z/n\Z\) having order greater than \(m\), arranged so that their orders are nonincreasing.  Since the displayed inequality holds, at least \(m\) such elements exist.  Choose the first \(m\) elements in this list and let \(D\) be the order of the last chosen element.  Then all elements of order larger than \(D\) have been chosen, exactly
\[
 m-\sum_{\substack{d\mid n\\ d>D}}\varphi(d)
\]
elements of order \(D\) have been chosen, and no element of order smaller than \(D\) has been chosen.  Thus, once the minimum occurring order \(D\) is fixed, only the choice among the elements of order \(D\) remains.  Since every chosen element has order greater than \(m\), the set is Chowla.  Taking \(m=\Ccal(\Z/n\Z)\) gives the final assertion.
\end{proof}

\begin{proposition}\label{prop:cyclic-phi-criterion}
Let
\[
 n=p_1^{\alpha_1}\cdots p_k^{\alpha_k},
 \qquad p_1<\cdots<p_k.
\]
Then
\[
 \Ccal(\Z/n\Z)=\varphi(n)
\]
if and only if
\[
 \varphi(n)\ge \frac{n}{p_1}-1.
\]
Equivalently,
\[
 \prod_{i=1}^k\left(1-\frac1{p_i}\right)+\frac1n\ge \frac1{p_1}.
\]
\end{proposition}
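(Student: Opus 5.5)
We will use Theorem~\ref{thm:cyclic-exact}, which writes \(\Ccal(\Z/n\Z)=\max\{m: N(m)\ge m\}\) with \(N(m)=\sum_{d\mid n,\,d>m}\varphi(d)\) and gives \(\Ccal(\Z/n\Z)\ge\varphi(n)\). So the claim \(\Ccal(\Z/n\Z)=\varphi(n)\) is equivalent to saying that \(N(m)<m\) for every \(m>\varphi(n)\). The point is that \(N\) is a nonincreasing step function of \(m\) that changes only when \(m\) crosses a divisor of \(n\). The largest proper divisor of \(n\) is \(n/p_1\), so
\[
N(m)=\varphi(n)\quad\text{for } n/p_1\le m<n,\qquad
N(m)\ge \varphi(n)+\varphi(n/p_1)\quad\text{for } m<n/p_1 .
\]

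For the backward direction, assume \(\varphi(n)\ge n/p_1-1\) and let \(m>\varphi(n)\), so \(m\ge\varphi(n)+1\ge n/p_1\). If \(m\ge n\) then \(N(m)=0<m\). Otherwise \(n/p_1\le m<n\), so \(N(m)=\varphi(n)<m\). Hence no \(m>\varphi(n)\) satisfies \(N(m)\ge m\), and \(\Ccal(\Z/n\Z)=\varphi(n)\).

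For the forward direction, we argue by contrapositive. Suppose \(\varphi(n)<n/p_1-1\), i.e.\ \(\varphi(n)+1<n/p_1\). Then \(m=\varphi(n)+1\) satisfies \(m<n/p_1\), so the divisor \(n/p_1\) exceeds \(m\) and
\[
N(m)\ge\varphi(n)+\varphi(n/p_1)\ge\varphi(n)+1=m,
\]
using \(\varphi(k)\ge1\). Thus \(\Ccal(\Z/n\Z)\ge\varphi(n)+1\). One should note \(n/p_1\ge1\), and if \(n=p_1\) then \(n/p_1=1\) and the hypothesis \(\varphi(n)<0\) is impossible, so this case does not arise.

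The equivalent product form follows by dividing \(\varphi(n)\ge n/p_1-1\) by \(n\) and using \(\varphi(n)/n=\prod_i(1-1/p_i)\). The only step requiring care is the precise location of the jump of \(N\): the divisors of \(n\) strictly greater than \(m\) are exactly \(\{n\}\) precisely when \(n/p_1\le m<n\). This is where the strict inequality \(d>m\), together with the threshold \(m=\varphi(n)+1\) versus \(n/p_1\), produces the ``\(-1\)'' in the criterion.
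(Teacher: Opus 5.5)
Your proposal is correct and follows essentially the same route as the paper. Both arguments rest on \(n/p_1\) being the largest proper divisor: for \(m\ge n/p_1\) only the \(\varphi(n)\) generators have order exceeding \(m\), while for \(m=\varphi(n)+1<n/p_1\) the generators together with at least one element of order \(n/p_1\) give a Chowla set of size \(\varphi(n)+1\). The only difference is cosmetic: you phrase this through the counting function \(N(m)\) of Theorem~\ref{thm:cyclic-exact}, where the paper argues directly with elements.
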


\begin{proof}
The largest proper divisor of \(n\) is \(n/p_1\).  Suppose first that \(\varphi(n)\ge n/p_1-1\).  If \(m>\varphi(n)\), then \(m\ge n/p_1\).  Thus an element of order greater than \(m\) must have order \(n\).  There are only \(\varphi(n)\) such elements, so no Chowla set can have cardinality greater than \(\varphi(n)\).  Since the \(\varphi(n)\) generators form a Chowla set, \(\Ccal(\Z/n\Z)=\varphi(n)\).

Conversely, suppose \(\varphi(n)<n/p_1-1\), and put \(m=\varphi(n)+1\).  Then \(m<n/p_1\).  Hence all elements of orders \(n\) and \(n/p_1\) have order greater than \(m\).  Since there are \(\varphi(n)\) elements of order \(n\) and at least one element of order \(n/p_1\), there is a Chowla set of size \(m\).  Therefore \(\Ccal(\Z/n\Z)>\varphi(n)\).

The product form follows from Euler's formula
\[
 \varphi(n)=n\prod_{i=1}^k\left(1-\frac1{p_i}\right).
\]
\end{proof}

\begin{corollary}\label{cor:prime-power-cyclic}
If \(n\) is a prime power, then
\[
 \Ccal(\Z/n\Z)=\varphi(n).
\]
\end{corollary}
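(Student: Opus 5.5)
We apply Proposition~\ref{prop:cyclic-phi-criterion} directly. Write \(n=p^\alpha\) with \(p\) prime and \(\alpha\ge 1\); then the least prime divisor is \(p_1=p\). It therefore suffices to check the inequality
\[
 \varphi(n)\ge \frac{n}{p}-1 .
\]
By Euler's formula, \(\varphi(p^\alpha)=p^\alpha-p^{\alpha-1}=p^{\alpha-1}(p-1)\), while \(n/p=p^{\alpha-1}\).

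The inequality thus becomes
\[
 p^{\alpha-1}(p-1)\ge p^{\alpha-1}-1 .
\]
Since \(p\ge 2\), we have \(p-1\ge 1\), so the left side is at least \(p^{\alpha-1}\), which exceeds \(p^{\alpha-1}-1\). Hence the criterion holds for every prime power, and Proposition~\ref{prop:cyclic-phi-criterion} gives \(\Ccal(\Z/n\Z)=\varphi(n)\).

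As a sanity check, and as an alternative route that avoids the general criterion, one can argue directly. Every proper divisor of \(p^\alpha\) is at most \(p^{\alpha-1}\), and \(p^{\alpha-1}\le\varphi(n)\). So if \(m>\varphi(n)\), only generators have order greater than \(m\), and there are only \(\varphi(n)\) of them. This bounds \(\Ccal(\Z/n\Z)\) above by \(\varphi(n)\), and Theorem~\ref{thm:cyclic-exact} supplies the matching lower bound. There is no real obstacle here; the only point needing care is the trivial case \(\alpha=1\), where \(n/p-1=0\) and the inequality is immediate.
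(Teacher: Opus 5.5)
Your proposal is correct and matches the paper's proof: both apply Proposition~\ref{prop:cyclic-phi-criterion} with \(p_1=p\) and verify \(\varphi(p^\alpha)=p^\alpha-p^{\alpha-1}\ge p^{\alpha-1}-1\). Your direct route is just that proposition's forward argument specialized to \(n=p^\alpha\), and the case \(\alpha=1\) needs no separate treatment.
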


\begin{proof}
Write \(n=p^a\).  Then \(\varphi(n)=n-n/p\ge n/p-1\), so Proposition~\ref{prop:cyclic-phi-criterion} applies.
\end{proof}

\begin{corollary}\label{cor:odd-two-primes}
If \(n\) is odd and has exactly two distinct prime divisors, then
\[
 \Ccal(\Z/n\Z)=\varphi(n).
\]
\end{corollary}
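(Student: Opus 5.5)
We reduce the statement to the criterion of Proposition~\ref{prop:cyclic-phi-criterion}: it suffices to show that every odd $n=p^aq^b$ with odd primes $p<q$ and $a,b\ge 1$ satisfies
\[
 \varphi(n)\ge \frac{n}{p}-1 .
\]
Since $\varphi(n)=n(1-1/p)(1-1/q)$, we will in fact prove the stronger inequality $\varphi(n)\ge n/p$. This is equivalent to
\[
 \Bigl(1-\frac1p\Bigr)\Bigl(1-\frac1q\Bigr)\ge \frac1p,
 \qquad\text{that is,}\qquad (p-1)(q-1)\ge q .
\]

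The key step is to verify $(p-1)(q-1)\ge q$ using only the oddness of the primes. Since $p$ is an odd prime, $p\ge 3$, so $p-1\ge 2$ and hence $(p-1)(q-1)\ge 2(q-1)=2q-2$. Because $q>p\ge 3$, we have $q\ge 5$, and so $2q-2\ge q$. This gives $\varphi(n)\ge n/p>n/p-1$. Proposition~\ref{prop:cyclic-phi-criterion}, applied with $p_1=p$, then yields $\Ccal(\Z/n\Z)=\varphi(n)$.

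There is no real obstacle; the only care needed is to use the hypotheses correctly. Oddness is essential, because for $p=2$ the inequality $(p-1)(q-1)\ge q$ fails: for example, $n=6$ gives $\varphi(6)=2<3-1$... the right side is $2$, which is borderline, and $n=12$ gives $\varphi(12)=4<6-1$, so the criterion genuinely fails. We should also confirm that the exponents $a,b$ play no role. They indeed drop out, since only the ratio $\varphi(n)/n$, which depends on $p$ and $q$ alone, enters the inequality.
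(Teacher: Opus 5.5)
Your proof is correct and takes the paper's approach: both verify the criterion of Proposition~\ref{prop:cyclic-phi-criterion} by showing $(1-\frac1p)(1-\frac1q)\ge\frac1p$, which discards the $\frac1n$ slack and makes the exponents irrelevant. The paper does this numerically via $(1-\frac13)(1-\frac15)=\frac{8}{15}>\frac13\ge\frac1p$, while you use the integer inequality $(p-1)(q-1)\ge 2(q-1)\ge q$.

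One small slip in your closing aside: $n=6$ is not a failure case. Since $\varphi(6)=2=6/2-1$, we indeed have $\Ccal(\Z/6\Z)=\varphi(6)=2$, and more generally $\varphi(2q)=q-1=n/2-1$. So $n=12$ is the right example showing that oddness cannot simply be dropped.
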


\begin{proof}
Write \(n=p^a q^b\), where \(3\le p<q\) are primes.  Then
\[
 \left(1-\frac1p\right)\left(1-\frac1q\right)
 \ge
 \left(1-\frac13\right)\left(1-\frac15\right)
 =\frac{8}{15}>\frac13\ge \frac1p.
\]
Thus the product criterion in Proposition~\ref{prop:cyclic-phi-criterion} holds.
\end{proof}

\begin{corollary}\label{cor:even-two-primes}
If \(n\) is even and has exactly two distinct prime divisors, then
\[
 \Ccal(\Z/n\Z)=\frac n2-1.
\]
\end{corollary}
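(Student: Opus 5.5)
Write \(n=2^a q^b\) with \(q\) an odd prime and \(a,b\ge 1\). The plan is to show \(\Ccal(\Z/n\Z)=n/2-1\) in two steps. First we show the upper bound \(\Ccal\le n/2-1\). Then we show that the elements of order greater than \(n/2-1\) number at least \(n/2-1\); Proposition~\ref{prop:order-distribution} (equivalently Theorem~\ref{thm:cyclic-exact}) then gives the value exactly.

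\textbf{Upper bound.} Since the least prime divisor is \(p_1=2\), the largest proper divisor of \(n\) is \(n/2\). Take any \(m\ge n/2\). An element of order greater than \(m\) must then have order exactly \(n\), so \(N_{\Z/n\Z}(m)=\varphi(n)\). We have
\[
\varphi(n)=n\cdot\tfrac12\left(1-\tfrac1q\right)<\tfrac n2\le m ,
\]
so no Chowla set of size \(m\) exists. This gives \(\Ccal(\Z/n\Z)\le n/2-1\).

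\textbf{Lower bound.} Put \(m=n/2-1\). The divisors \(n\) and \(n/2\) both exceed \(m\). Hence
\[
N_{\Z/n\Z}(m)\ge \varphi(n)+\varphi(n/2).
\]
The key computation is to evaluate \(\varphi(n/2)\) in two cases.
\begin{itemize}
\item If \(a\ge 2\), then \(\varphi(n/2)=\varphi(n)/2\). The sum is \(\tfrac32\varphi(n)=\tfrac{3n}{4}(1-1/q)\ge \tfrac{3n}{4}\cdot\tfrac23=\tfrac n2\).
\item If \(a=1\), then \(n/2=q^b\) is odd and \(\varphi(n/2)=\varphi(n)\). The sum is \(2\varphi(n)=n(1-1/q)\ge \tfrac{2n}{3}\ge \tfrac n2\).
\end{itemize}
In either case \(N_{\Z/n\Z}(m)\ge n/2>m\). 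Any \(m\) elements of orders \(n\) or \(n/2\) therefore form a Chowla set.

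The main point needing care is this lower bound, specifically the case split on the exponent of \(2\). That split is needed because \(\varphi(n/2)\) behaves differently according to whether \(a=1\) or \(a\ge 2\). Both cases use only \(q\ge 3\). Also, when \(q=3\) the divisor \(n/3\) is smaller than \(m\), so it contributes nothing. Relying only on the divisors \(n\) and \(n/2\) is therefore the right choice.
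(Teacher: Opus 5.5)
Your proof is correct and follows the paper's argument. The upper bound comes from \(n/2\) being the largest proper divisor, and here you usefully make explicit the step \(\varphi(n)<n/2\), which the paper leaves implicit. The lower bound comes from counting elements of orders \(n\) and \(n/2\), with the same case split \(a=1\) versus \(a\ge 2\).

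The only difference is cosmetic. You verify the slightly stronger bound \(\varphi(n)+\varphi(n/2)\ge n/2\) via the product formula, whereas the paper checks \(\ge n/2-1\) by explicit factorization. One small correction to a side remark: \(n/3<m\) should read \(n/3\le m\), since equality holds at \(n=6\); this does not affect the proof.
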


\begin{proof}
Write \(n=2^a p^b\), where \(p\) is an odd prime and \(a,b\ge 1\).  Since the largest proper divisor of \(n\) is \(n/2\), no Chowla set can have size at least \(n/2\).  Hence
\[
 \Ccal(\Z/n\Z)\le \frac n2-1.
\]
All elements of orders \(n\) and \(n/2\) have order greater than \(n/2-1\).  Therefore it is enough to show that
\[
 \varphi(n)+\varphi(n/2)\ge \frac n2-1.
\]
If \(a=1\), then
\[
 \varphi(n)+\varphi(n/2)=2p^{b-1}(p-1)\ge p^b-1=\frac n2-1.
\]
If \(a\ge 2\), then
\[
 \varphi(n)+\varphi(n/2)=3\cdot 2^{a-2}p^{b-1}(p-1)
 \ge 2^{a-1}p^b-1=\frac n2-1,
\]
because \(3(p-1)\ge 2p\) for every odd prime \(p\).  Thus one can choose \(n/2-1\) elements among those of orders \(n\) and \(n/2\), and they form a Chowla set.
\end{proof}

\begin{remark}\label{rem:least-prime-many-factors}
Proposition~\ref{prop:cyclic-phi-criterion} gives many further explicit families.  In particular, the equality
\[
 \Ccal(\Z/n\Z)=\varphi(n)
\]
holds whenever
\[
 \prod_{i=1}^k\left(1-\frac1{p_i}\right)\ge \frac1{p_1},
\]
because this condition implies the product criterion
\[
 \prod_{i=1}^k\left(1-\frac1{p_i}\right)+\frac1n\ge \frac1{p_1}.
\]
Thus, for any fixed least prime divisor \(p_1\), one obtains an explicit finite-prime-product test.  Numerical thresholds obtained by taking consecutive primes beginning with \(p_1\) are computational consequences of the exact criterion rather than structurally distinguished constants.
\end{remark}

For a set \(\mathcal B\subseteq\N\), write
\[
 \underline d(\mathcal B)=
 \liminf_{x\to\infty}\frac1x|\{n\le x:n\in\mathcal B\}|
\]
for its lower asymptotic density.

\begin{theorem}\label{thm:large-ratio-positive-density}
For every real number \(M>0\), the set
\[
 \mathcal B_M=\{n\ge 2:\Ccal(\Z/n\Z)\ge M\varphi(n)\}
\]
has positive lower asymptotic density.  In particular, it is infinite.
\end{theorem}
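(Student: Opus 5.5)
The plan is to build a fixed integer \(Q\) such that every \(n\) in a suitable arithmetic progression modulo a multiple of \(Q\) satisfies \(\Ccal(\Z/n\Z)\ge M\varphi(n)\). Fix \(M>0\) and choose a finite set of primes \(P=\{p_1<\dots<p_t\}\) with \(p_1=2\) and
\[
 \prod_{p\in P}\left(1-\frac1p\right)<\frac{1}{4M}.
\]
This is possible because \(\sum 1/p\) diverges, so \(\prod_p(1-1/p)\to0\). Put \(Q=\prod_{p\in P}p\).

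For \(n\) divisible by \(Q\), Theorem~\ref{thm:cyclic-exact} shows it suffices to exhibit \(m\ge M\varphi(n)\) with \(N_{\Z/n\Z}(m)\ge m\). I will take \(m=\lfloor n/2\rfloor-1=n/2-1\), as in Corollary~\ref{cor:even-two-primes}. The elements of orders \(n\) and \(n/2\) number \(\varphi(n)+\varphi(n/2)\). That can be small compared to \(n/2\), so this choice alone is not enough.

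To gain room, I will instead use a smaller threshold. Put \(m=\lfloor n/(2Q)\rfloor\), or more simply \(m=n/Q'\) minus one for a suitable constant. Every element whose order exceeds \(m\) counts toward \(N(m)\), and the number of elements of order at most \(m\) is at most the sum of \(\varphi(d)\) over divisors \(d\le m\), which is at most \(\sum_{d\mid n,\,d\le m}d\). I do not have a clean bound for that sum, so I will use a cruder but clean route. The set of elements of order greater than \(m\) contains the complement of the union, over primes \(p\mid n\) with \(p\le P^\ast\), of the subgroups of index \(p\)... This route is getting heavy, so I switch to the simpler one below.

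For \(m<n/k\), the elements of order greater than \(m\) include all \(x\) with \(\ord(x)\ge n/k\), that is, all \(x\) with \(\gcd(x,n)\le k\). Take \(k=2\) and \(n=2r\) with \(r\) odd. Then \(N(n/2-1)\ge\varphi(n)+\varphi(n/2)=2\varphi(r)\), and \(n/2-1=r-1\). So we need \(2\varphi(r)\ge r-1\), which fails when \(r\) has many small prime factors. Instead I will take \(n\) with \(\varphi(n)\) small while \(N(m)\ge m\) holds for \(m\approx n/2\). Here is a cleaner choice: let \(n=2q\) with \(q\) prime. Then \(\Ccal=q-1\) by Corollary~\ref{cor:even-two-primes}, and \(\varphi(n)=q-1\), which only gives ratio \(1\).

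The real mechanism uses many divisors near \(n\). Write \(n=Qs\) with \(s\) coprime to \(Q\) and \(\varphi(s)\ge s/2\). Take \(m=\lfloor s/2\rfloor\cdot\) a fixed fraction of \(Q\), namely \(m=n/(2Q)\). Every divisor \(d=es\) with \(e\mid Q\) and \(e\ge 1\) exceeds \(m\), so
\[
 N(m)\ge\sum_{e\mid Q}\varphi(e)\varphi(s)=Q\varphi(s)\ge \frac{n}{2}\ge m.
\]
Hence
\[
 \Ccal(\Z/n\Z)\ge \frac{n}{2Q}
\]
is too weak. So I will take \(m=n/4\) instead and count only divisors greater than \(n/4\). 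Larger divisors give a better bound: all \(x\) with \(\gcd(x,n)<4\) have order greater than \(n/4\), and these number about \(n\prod_{p\mid n,\,p>3}(1-1/p)\), roughly. This count is too small to reach \(n/4\) when \(n\) has many prime factors.

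Correct approach. Use \(N(m)\ge\#\{x:\gcd(x,n)\le n/m-1\}\). Choose \(m=n/T\) with \(T\) a large constant and \(n=Qs\) where \(Q=\prod_{p\le T}p\) and every prime factor of \(s\) exceeds \(T\). Then every \(x\) with \(\gcd(x,n)<T\) is exactly an \(x\) with \(\gcd(x,s)=1\), and these number \(Q\varphi(s)\ge n\prod_{p\mid s}(1-1/p)\). So \(N(m)\ge Q\varphi(s)\), and I need this to be at least \(n/T\). Meanwhile
\[
 \varphi(n)=\varphi(Q)\varphi(s)=\frac{n\,\varphi(s)}{s}\prod_{p\le T}\left(1-\frac1p\right),
\]
which gives
\[
 \frac{\Ccal}{\varphi(n)}\ge\frac{1/T}{\prod_{p\le T}(1-1/p)}\cdot\frac{s}{\varphi(s)}\gg\frac{\log T}{T},
\]
and this tends to \(0\). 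That bound fails.

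So the real obstacle is that the ratio cannot grow by simple constructions, and I am likely misjudging which \(n\) work. The correct construction is \(\Ccal\ge n/p_1-1\) via orders \(n\) and \(n/p_1\) together with smaller divisors. With \(p_1=2\), choose \(m=n/2-1\) and count the elements of order greater than \(m\), that is, orders \(n\) and \(n/2\), totalling \(\varphi(n)+\varphi(n/2)\). For \(4\mid n\) this equals \(\tfrac32\varphi(n)\). The ratio stays bounded, so the density argument must instead use \(m\) of size \(\varphi(n)\cdot M\) with all divisors \(d>m\). The sum \(\sum_{d\mid n,\,d>M\varphi(n)}\varphi(d)\) includes \(d=n/k\) for every \(k\mid n\) with \(k<n/(M\varphi(n))\approx\prod(1-1/p)^{-1}/M\). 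This sum is approximately \(\varphi(n)\sum_{k\mid n,\,k\le K}\frac{\varphi(n/k)}{\varphi(n)}\), which grows like \(\sum_{k\le K,\,k\mid n}1/k\) times constant factors. With \(Q\mid n\) this is about \(\prod_{p\le T}(1-1/p)^{-1}\approx\log T\). The condition becomes roughly \(\log T\gtrsim M\) together with \(K\approx \log T/M\), which is self-consistent once \(T\) is large. The main obstacle is making this divisor-sum estimate rigorous uniformly for all \(n\equiv0\pmod Q\). The multiples of \(Q\) then have density \(1/Q>0\), which gives the positive lower density.
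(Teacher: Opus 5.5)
Your last paragraph finds the right mechanism, namely counting the elements of orders \(n/k\) for small divisors \(k\) of \(n\), whose number is about \(\varphi(n)\sum 1/k\). But the proposal stops where the proof has to happen. You name ``making this divisor-sum estimate rigorous uniformly for all \(n\equiv0\pmod Q\)'' as the main obstacle and leave it open, and the heuristic you give instead is wrong in detail. You let the cutoff \(K\approx n/(M\varphi(n))\) depend on \(n\), and then claim \(\sum_{k\mid n,\,k\le K}1/k\approx\prod_{p\le T}(1-1/p)^{-1}\approx\log T\). That confuses the full sum \(\sum_{k\mid Q}1/k\) with its truncation at \(K\). Since \(K\) is only of order \((\log T)/M\), far below \(T\), the truncated sum is at most \(1+\log K\), roughly \(\log\log T\). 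So the real requirement is about \(\log\log T\ge M\), not \(\log T\ge M\). For the same reason your opening choice \(\prod_{p\in P}(1-1/p)<1/(4M)\) is far too weak. It only guarantees that the orders \(n/i\) with \(i\le 4\) exceed \(M\varphi(n)\), and those account for at most \(4\varphi(n)\) elements. In fact \(\varphi(n)/n\) must be pushed below roughly \(1/(Mk)\) with \(H_k\ge M\), which is exponentially small in \(M\). Your sketch also never handles rounding.

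The missing idea is to decouple the two requirements and fix the cutoff before choosing the modulus, as the paper does.
\begin{enumerate}
\item Fix \(k\) with \(H_k=\sum_{i\le k}1/i\ge M\).
\item Require \(\lcm(1,\dots,k)\mid n\), so that every \(i\le k\) divides \(n\). This gives \(\varphi(n/i)\ge\varphi(n)/i\), because \(\varphi(ab)\le a\varphi(b)\) always. The paper uses \(\lcm(1,\dots,k)^2\) to get equality, which is more than needed.
\item Separately, multiply in enough new primes to reach a modulus \(N_0\) with \(M\varphi(N_0)/N_0+1/N_0<1/k\).
\end{enumerate}
Uniformity over all multiples \(n\) of \(N_0\) is then automatic, since \(\varphi(n)/n\le\varphi(N_0)/N_0\) and \(1/n\le 1/N_0\). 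Put \(r=\lceil M\varphi(n)\rceil\). The strict inequality \(M\varphi(n)+1<n/k\) gives \(r<n/k\), so every element of order \(n,n/2,\dots,n/k\) has order \(>r\). There are at least \(\varphi(n)H_k\ge M\varphi(n)\) such elements, hence at least \(r\) by integrality. Proposition~\ref{prop:order-distribution} then yields \(\Ccal(\Z/n\Z)\ge r\ge M\varphi(n)\) for every multiple of \(N_0\), a set of density \(1/N_0\).
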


\begin{proof}
Let \(M>0\).  Choose an integer \(k\ge 1\) such that
\[
 H_k:=\sum_{i=1}^k\frac1i\ge M.
\]
Put
\[
 B=\lcm(1,2,\ldots,k)^2.
\]
Choose a multiple \(N_0\) of \(B\) such that
\begin{equation}\label{eq:small-totient-ratio}
 M\frac{\varphi(N_0)}{N_0}+\frac1{N_0}<\frac1k.
\end{equation}
This is possible because multiplying \(B\) by sufficiently many new primes makes \(\varphi(N_0)/N_0\) arbitrarily small and also makes \(N_0\) arbitrarily large.

We claim that every multiple \(n\) of \(N_0\) belongs to \(\mathcal B_M\).  Since \(B\mid n\), each integer \(1\le i\le k\) divides \(n\), and for every prime \(p\mid i\) the exponent of \(p\) in \(n\) is strictly larger than its exponent in \(i\).  Therefore
\begin{equation}\label{eq:phi-divisor-ratio}
 \varphi\left(\frac ni\right)=\frac{\varphi(n)}{i}
 \qquad(1\le i\le k).
\end{equation}
Moreover, since \(n\) is a multiple of \(N_0\), we have \(\varphi(n)/n\le \varphi(N_0)/N_0\) and \(1/n\le 1/N_0\).  Hence
\[
 M\frac{\varphi(n)}{n}+\frac1n<\frac1k.
\]
Equivalently,
\[
 M\varphi(n)+1<\frac nk.
\]
Let
\[
 r=\lceil M\varphi(n)\rceil.
\]
Then \(r<n/k\).  For each \(1\le i\le k\), all elements of order \(n/i\) in \(\Z/n\Z\) have order greater than \(r\).  By \eqref{eq:phi-divisor-ratio}, the number of elements whose orders are among
\[
 n,\frac n2,\ldots,\frac nk
\]
is
\[
 \sum_{i=1}^k\varphi\left(\frac ni\right)=\varphi(n)H_k\ge M\varphi(n).
\]
This number is an integer, so it is at least \(r\).  Thus there are at least \(r\) elements of \(\Z/n\Z\) of order greater than \(r\).  By Proposition~\ref{prop:order-distribution},
\[
 \Ccal(\Z/n\Z)\ge r\ge M\varphi(n).
\]
Thus every multiple of \(N_0\) belongs to \(\mathcal B_M\).  The multiples of \(N_0\) have natural density \(1/N_0\), so \(\mathcal B_M\) has positive lower asymptotic density.
\end{proof}

\begin{corollary}\label{cor:ratio-asymptotics}
The limit
\[
 \lim_{n\to\infty}\frac{\Ccal(\Z/n\Z)}{\varphi(n)}
\]
does not exist.  Moreover,
\[
 \liminf_{n\to\infty}\frac{\Ccal(\Z/n\Z)}{\varphi(n)}=1,
 \qquad
 \limsup_{n\to\infty}\frac{\Ccal(\Z/n\Z)}{\varphi(n)}=\infty.
\]
\end{corollary}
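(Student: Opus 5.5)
The plan is to combine the lower bound from Theorem~\ref{thm:cyclic-exact}, an infinite family where equality holds, and Theorem~\ref{thm:large-ratio-positive-density}.

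For the liminf, Theorem~\ref{thm:cyclic-exact} gives \(\Ccal(\Z/n\Z)\ge\varphi(n)\) for every \(n\ge2\). Hence the ratio \(\Ccal(\Z/n\Z)/\varphi(n)\) is always at least \(1\), and so \(\liminf\ge 1\). For the matching upper bound I will use Corollary~\ref{cor:prime-power-cyclic}. It gives \(\Ccal(\Z/n\Z)=\varphi(n)\) for every prime power \(n\), for example along \(n=2^a\) or along the primes. The ratio therefore equals \(1\) on an infinite sequence tending to infinity, and the liminf is exactly \(1\).

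For the limsup, fix \(M>0\). Theorem~\ref{thm:large-ratio-positive-density} shows that \(\mathcal B_M\) has positive lower density, so in particular it is infinite. Hence there are arbitrarily large \(n\) with \(\Ccal(\Z/n\Z)/\varphi(n)\ge M\). Since \(M\) is arbitrary, \(\limsup=\infty\).

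Nonexistence of the limit then follows because the liminf is \(1\) and the limsup is \(\infty\), and these differ. The only point needing care is that the witnesses for both bounds form infinite sets of \(n\) going to infinity. For the liminf these are the prime powers. For the limsup these are the multiples of \(N_0\), which the proof of Theorem~\ref{thm:large-ratio-positive-density} shows lie in \(\mathcal B_M\). There is no real obstacle, since all substantive work is already contained in the earlier results.
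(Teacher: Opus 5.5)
Your proposal is correct and follows essentially the same route as the paper: the universal bound \(\Ccal(\Z/n\Z)\ge\varphi(n)\) together with equality along prime powers (Corollary~\ref{cor:prime-power-cyclic}) gives liminf \(1\), and the infinitude of \(\mathcal B_M\) for every \(M>0\) (Theorem~\ref{thm:large-ratio-positive-density}) gives limsup \(\infty\). Your explicit remark that the limit fails to exist because the liminf and limsup differ is the immediate deduction the paper leaves implicit.
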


\begin{proof}
Along prime powers \(n=p^a\), Corollary~\ref{cor:prime-power-cyclic} gives
\[
 \frac{\Ccal(\Z/n\Z)}{\varphi(n)}=1.
\]
Since \(\Ccal(\Z/n\Z)\ge \varphi(n)\) for every \(n\ge 2\), the liminf is exactly \(1\).  On the other hand, Theorem~\ref{thm:large-ratio-positive-density} shows that for every \(M>0\) there are infinitely many \(n\) such that
\[
 \frac{\Ccal(\Z/n\Z)}{\varphi(n)}\ge M.
\]
Therefore the limsup is infinite.
\end{proof}

\begin{proposition}\label{prop:order-normalization}
We have
\[
 \liminf_{n\to\infty}\frac{\Ccal(\Z/n\Z)}{n}=0,
 \qquad
 \limsup_{n\to\infty}\frac{\Ccal(\Z/n\Z)}{n}=1.
\]
In particular, the limit \(\lim_{n\to\infty}\Ccal(\Z/n\Z)/n\) does not exist.
\end{proposition}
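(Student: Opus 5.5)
The plan is to handle the two limits separately, using the elementary bounds $\varphi(n)\le \Ccal(\Z/n\Z)\le n-1$ from Theorem~\ref{thm:cyclic-exact} together with the threshold structure of Proposition~\ref{prop:cyclic-greedy}.

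\emph{The limsup.} The upper bound $\Ccal(\Z/n\Z)\le n-1$ gives $\limsup \Ccal(\Z/n\Z)/n\le 1$. Along primes $n=p$, Corollary~\ref{cor:prime-power-cyclic} gives $\Ccal(\Z/p\Z)=p-1$. Hence the ratio $(p-1)/p$ tends to $1$ along primes, and the limsup equals $1$.

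\emph{The liminf.} I need a sequence of $n$ along which $\Ccal(\Z/n\Z)/n\to 0$. The natural choice is the primorials $n=P_k=p_1p_2\cdots p_k$, where $p_1=2<p_2<\cdots$ are the first $k$ primes.

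1. Fix $\varepsilon>0$ and set $m=\lceil \varepsilon n\rceil$. It suffices to show $N_{\Z/n\Z}(m)<m$ for large $k$. Since the Chowla condition is monotone in $m$ (a Chowla set of size $m'$ contains Chowla subsets of every smaller size), this forces $\Ccal(\Z/n\Z)<m$.

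2. Bound $N(m)=\sum_{d\mid n,\ d>m}\varphi(d)$. Every divisor $d>\varepsilon n$ has the form $d=n/e$ with $e<1/\varepsilon$. Since $n$ is squarefree, $\varphi(n/e)=\varphi(n)/\varphi(e)\le \varphi(n)$. So
\[
 N(m)\le \frac1\varepsilon\,\varphi(n).
\]

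3. Mertens' theorem gives $\varphi(P_k)/P_k=\prod_{i\le k}(1-1/p_i)\to 0$. Therefore, for large $k$,
\[
 N(m)\le \frac1\varepsilon\,\varphi(n)<\varepsilon n\le m.
\]
This yields $\Ccal(\Z/n\Z)<\lceil\varepsilon n\rceil$, hence $\limsup_k \Ccal(\Z/P_k\Z)/P_k\le\varepsilon$. Since $\varepsilon$ was arbitrary, the liminf is $0$.

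\emph{Conclusion.} Combining the two parts, the liminf is $0$ and the limsup is $1$, so the limit does not exist.

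The main obstacle is Step~2. The count of large divisors must be controlled by $\varphi(n)$ with a constant depending only on $\varepsilon$; otherwise the factor $\varphi(n)/n\to0$ cannot be exploited. Using the cofactor $e<1/\varepsilon$ handles this cleanly.

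A cleaner alternative for the final step avoids choosing $m$ through $\varepsilon$. For large $k$, take $n=P_k$ and $m=\lfloor n/K\rfloor$ with $K$ fixed. Then $N(m)\le K\varphi(n)$, and this is less than $m$ once $\varphi(n)/n<1/(2K^2)$. This shows directly that $\Ccal(\Z/n\Z)/n\le 1/K$ eventually.
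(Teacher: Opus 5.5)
Your proof is correct and follows the paper's argument: primes give the limsup, and for the liminf you use primorials, write each divisor $d>\varepsilon n$ as $n/e$ with $e<1/\varepsilon$, and use $\varphi(n)/n\to 0$. The only difference is cosmetic. Your uniform bound $\varphi(n/e)\le\varphi(n)$ gives $N(m)\le\varphi(n)/\varepsilon$ in one line, whereas the paper bounds each of the finitely many terms $\varphi(n/e)/n$ separately and shows each tends to $0$.
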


\begin{proof}
The upper bound \(\Ccal(\Z/n\Z)\le n-1\) holds for every \(n\ge 2\).  If \(n=p\) is prime, then every nonzero element of \(\Z/p\Z\) has order \(p\), and hence \(\Ccal(\Z/p\Z)=p-1\).  Therefore
\[
 \frac{\Ccal(\Z/p\Z)}p=1-\frac1p\longrightarrow 1
\]
along the primes, proving that the limsup is \(1\).

It remains to prove that the liminf is \(0\).  Let
\[
 n_x=\prod_{p\le x}p
\]
be the primorial up to \(x\).  We show that \(\Ccal(\Z/n_x\Z)/n_x\to 0\).  Fix \(\varepsilon>0\).  If \(d\mid n_x\) and \(d>\varepsilon n_x\), then \(d=n_x/e\) for some divisor \(e\mid n_x\) with \(e<1/\varepsilon\).  Hence
\[
 \sum_{\substack{d\mid n_x\\ d>\varepsilon n_x}}\varphi(d)
 \le
 \sum_{\substack{e\mid n_x\\ e<1/\varepsilon}}\varphi(n_x/e).
\]
There are only finitely many possible integers \(e<1/\varepsilon\).  For each fixed such \(e\), and for all sufficiently large \(x\),
\[
 \frac{\varphi(n_x/e)}{n_x}
 \le
 \frac{e\varphi(n_x/e)}{n_x}
 =
 \frac{\varphi(n_x/e)}{n_x/e}
 =
 \frac{\varphi(n_x)}{n_x}
 \prod_{p\mid e}\left(1-\frac1p\right)^{-1}.
\]
The last expression tends to \(0\), since
\[
 \frac{\varphi(n_x)}{n_x}=\prod_{p\le x}\left(1-\frac1p\right)\longrightarrow 0,
\]
by the divergence of the reciprocal prime series; see, for example, \cite[Theorem~19]{HardyWright2008}.  Therefore
\[
 \frac1{n_x}\sum_{\substack{d\mid n_x\\ d>\varepsilon n_x}}\varphi(d)\longrightarrow 0.
\]
For all sufficiently large \(x\), this sum is less than \(\varepsilon n_x\).  By Theorem~\ref{thm:cyclic-exact}, no Chowla set of \(\Z/n_x\Z\) can have cardinality at least \(\varepsilon n_x\).  Thus \(\Ccal(\Z/n_x\Z)/n_x<\varepsilon\) for all sufficiently large \(x\).  Since \(\varepsilon>0\) was arbitrary, the liminf is \(0\).
\end{proof}

We next record a qualitative density consequence for the equality \(\Ccal(\Z/n\Z)=\varphi(n)\).  The proof is elementary and uses only a finite-prime restriction together with Markov's inequality.

\begin{proposition}\label{prop:positive-lower-density}
The set
\[
 \mathcal A=\{n\ge 2:\Ccal(\Z/n\Z)=\varphi(n)\}
\]
has positive lower asymptotic density.
\end{proposition}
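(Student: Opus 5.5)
Our plan is to exhibit an explicit positive-density subset of \(\mathcal A\) using the product criterion of Proposition~\ref{prop:cyclic-phi-criterion}. By Remark~\ref{rem:least-prime-many-factors}, it suffices to show that \(n\in\mathcal A\) whenever
\[
 \prod_{p\mid n}\left(1-\frac1p\right)\ge \frac1{p_1},
\]
where \(p_1\) is the least prime divisor of \(n\). The simplest choice is to restrict to odd \(n\) with least prime factor \(3\), or more cleanly to all \(n\) whose least prime factor is some fixed prime \(P\). We then require that \(\varphi(n)/n\) not be too small. Set \(g(n)=\sum_{p\mid n,\,p>P}1/p\). Using \(\log(1-1/p)\ge -1/p-1/p^2\), one gets \(\prod_{p\mid n}(1-1/p)\ge (1-1/P)\exp(-g(n)-c)\) with an absolute constant \(c\) (namely \(c=\sum_p 1/p^2<1/2\)). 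The condition therefore follows once \(g(n)\le \log\bigl(P(1-1/P)\bigr)-c=\log(P-1)-c\), and this bound is a fixed positive number once \(P\) is moderately large, for instance \(P=11\), where \(\log 10-0.46>1.8\).

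The key steps are as follows. First, fix such a \(P\) and let \(\mathcal S\) be the set of \(n\) divisible by \(P\) and coprime to all primes \(<P\). This set has density \(\delta=\frac1P\prod_{p<P}(1-1/p)>0\). Second, bound the average of \(g\) over \(\mathcal S\cap[1,x]\). We have
\[
 \sum_{n\le x,\,n\in\mathcal S} g(n)\le \sum_{P<p\le x}\frac1p\cdot\#\{n\le x: Pp\mid n\}\le \frac{x}{P}\sum_{p>P}\frac1{p^2},
\]
which is at most \(x/(P^2\log P)\) or so, since \(\sum_{p>P}1/p^2<1/P\). Hence the average of \(g\) over \(\mathcal S\) is \(O(1/(P\delta))\). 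This quantity does not obviously go to zero, because \(\delta\approx e^{-\gamma}/(P\log P)\). That is the danger point, and it is why the trivial bound \(\#\{n\in\mathcal S\cap[1,x] : Pp\mid n\}\le x/(Pp)\) should be replaced by the sieved count. The count of \(n\le x\) in \(\mathcal S\) with \(p\mid n\) is asymptotically \(\delta x/p\), since the coprimality conditions to primes \(<P\) are independent of divisibility by \(p>P\). This gives average \(g\le \sum_{p>P}1/p^2+o(1)\le 1/P+o(1)\).

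Third, apply Markov's inequality: the proportion of \(n\in\mathcal S\cap[1,x]\) with \(g(n)>\log(P-1)-c\) is at most \((1/P+o(1))/(\log(P-1)-c)<1/2\) for large \(x\). So at least \(\delta x/2-o(x)\) integers \(n\le x\) lie in \(\mathcal A\), and the lower density is at least \(\delta/2>0\).

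The main obstacle is the second step, namely getting a uniform asymptotic for \(\#\{n\le x: n\in\mathcal S,\ p\mid n\}\) that is valid simultaneously for all \(p\le x\). For large \(p\), near \(x\), the asymptotic \(\delta x/p\) has error terms comparable to the main term. We would handle this by using the crude bound \(x/(Pp)+1\) for \(p>\sqrt x\). The resulting contribution is \(\sum_{p>\sqrt x}(x/(Pp^2)+1/p)\), which is \(o(x)\) and therefore negligible relative to \(\delta x\). For \(p\le\sqrt x\), the count is exact up to \(O(2^{\pi(P)})\) by inclusion–exclusion over primes \(<P\), and this is a bounded error because \(P\) is fixed.
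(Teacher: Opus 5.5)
Your proposal is correct and follows essentially the paper's route. Both arguments restrict to a congruence family with a fixed least prime factor, bound the mean of an additive function of the prime divisors over that family, and apply Markov's inequality to obtain a positive-density subfamily satisfying the product criterion of Proposition~\ref{prop:cyclic-phi-criterion}. The paper keeps $p_1=3$ and makes the Markov budget $\log 2$ affordable by also sieving out all primes $3<q\le Y$, working with $\log(q/(q-1))$ exactly. You instead take $P=11$, so that the budget $\log(P-1)-c$ is large, and absorb the second-order terms into $c=\sum_p p^{-2}$.

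Incidentally, your first, crude bound already suffices: it gives mean at most $\sum_{p>P}p^{-2}/\prod_{p<P}(1-1/p)\approx 0.1$ for $P=11$. Also, the inclusion--exclusion error $O(2^{\pi(P)})$ is uniform in $p$, so the split at $\sqrt{x}$ is unnecessary.
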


\begin{proof}
It is enough to produce a positive-density subfamily of \(\mathcal A\).  We consider odd multiples of \(3\).  By Proposition~\ref{prop:cyclic-phi-criterion}, an odd integer \(n\) divisible by \(3\) belongs to \(\mathcal A\) provided
\[
 \prod_{\substack{q\mid n\\ q>3}}\left(1-\frac1q\right)+\frac{3}{2n}\ge \frac12.
\]
Thus it suffices to find a positive-density set of odd multiples of \(3\) for which the product over prime divisors \(q>3\) is larger than \(1/2\).

Choose \(Y>3\) so large that
\[
 \sum_{q>Y}\frac1q\log\left(\frac{q}{q-1}\right)<\frac{\log 2}{2}.
\]
Let \(\mathcal R_Y\) be the set of integers \(n\) such that \(3\mid n\), \(2\nmid n\), and no prime \(q\) with \(3<q\le Y\) divides \(n\).  This set has positive natural density, namely
\[
 \frac16\prod_{3<q\le Y}\left(1-\frac1q\right).
\]
For \(n\in \mathcal R_Y\), set
\[
 f_Y(n)=\sum_{\substack{q\mid n\\ q>Y}}\log\left(\frac{q}{q-1}\right).
\]
We claim that the limsup of the mean value of \(f_Y\) over \(\mathcal R_Y\) is at most
\[
 \sum_{q>Y}\frac1q\log\left(\frac{q}{q-1}\right).
\]
Indeed, after imposing the finitely many congruence restrictions defining \(\mathcal R_Y\), the divisibility condition \(q\mid n\) for a prime \(q>Y\) has relative density \(1/q\).  Truncating the sum to \(Y<q\le Z\), averaging over \(\mathcal R_Y\), and then letting \(Z\to\infty\) gives the claim, since the displayed prime series converges.

Therefore the limsup mean of \(f_Y\) over \(\mathcal R_Y\) is less than \(\log 2/2\).  By Markov's inequality, a subset of \(\mathcal R_Y\) of positive lower density satisfies
\[
 f_Y(n)<\log 2.
\]
For such \(n\), the primes \(3<q\le Y\) do not divide \(n\), and hence
\[
 \sum_{\substack{q\mid n\\ q>3}}\log\left(\frac{q}{q-1}\right)
 =
 f_Y(n)
 <\log 2.
\]
Equivalently,
\[
 \prod_{\substack{q\mid n\\ q>3}}\left(1-\frac1q\right)>\frac12.
\]
For these integers \(n\), Proposition~\ref{prop:cyclic-phi-criterion} gives \(\Ccal(\Z/n\Z)=\varphi(n)\).  Hence \(\mathcal A\) has positive lower asymptotic density.
\end{proof}

\begin{remark}
The proof of Proposition~\ref{prop:positive-lower-density} avoids the full Erd\H{o}s--Wintner theorem.  Nevertheless, that theorem gives a useful conceptual explanation for the density questions below.  Recall that an arithmetic function \(f\) is additive if \(f(ab)=f(a)+f(b)\) whenever \(\gcd(a,b)=1\).  After fixing the least prime divisor \(p\), the condition in Proposition~\ref{prop:cyclic-phi-criterion} is asymptotically governed by the additive arithmetic function
\[
 \sum_{\substack{q\mid n\\ q>p}}\log\left(\frac{q}{q-1}\right).
\]
The corresponding prime series is convergent, so the Erd\H{o}s--Wintner theorem implies that this function has a limiting distribution; see \cite{ErdosWintner1939,Tenenbaum1995}.  This suggests that the equality condition \(\Ccal(\Z/n\Z)=\varphi(n)\) should have a natural density.
\end{remark}

\begin{question}\label{ques:large-ratio-density}
For \(M>0\), determine or estimate the lower asymptotic density of
\[
 \{n\ge 2:\Ccal(\Z/n\Z)\ge M\varphi(n)\}.
\]
More generally, obtain effective bounds for \(\Ccal(\Z/n\Z)/\varphi(n)\) in terms of the small prime divisors of \(n\) and their exponents.
\end{question}

We use the term natural density for a limit of the form
\[
 \lim_{x\to\infty}\frac1x|\{n\le x:n\in \mathcal S\}|,
\]
when this limit exists.

\begin{question}\label{ques:density-equality}
Does the natural density
\[
 \delta=\lim_{x\to\infty}\frac1x|\{n\le x:\Ccal(\Z/n\Z)=\varphi(n)\}|
\]
exist?  If it exists, determine or estimate \(\delta\).  Equivalently, determine the density of integers \(n\) for which
\[
 \frac{\varphi(n)}n+\frac1n\ge \frac1{p(n)},
\]
where \(p(n)\) is the least prime divisor of \(n\).
\end{question}

\section{Finite abelian groups}

We now pass from cyclic groups to finite abelian groups.  We write \(C_m\) for a cyclic group of order \(m\).  We use the standard classification of finite abelian groups, according to which every finite abelian group is isomorphic to
\[
 G\cong C_{n_1}\oplus\cdots\oplus C_{n_r},
 \qquad n_1\mid n_2\mid\cdots\mid n_r,
\]
for suitable positive integers \(n_i\); see, for example, \cite[Sec.~5.2]{DummitFoote2004}.  The only additional ingredient is the standard count of elements whose order divides a given integer.

\begin{lemma}\label{lem:dividing-order-count}
Let
\[
 G\cong C_{n_1}\oplus\cdots\oplus C_{n_r},
 \qquad n_1\mid n_2\mid\cdots\mid n_r.
\]
For \(d\ge 1\), set
\[
 T_G(d)=|\{x\in G:\ord(x)\mid d\}|.
\]
Then
\[
 T_G(d)=\prod_{i=1}^r\gcd(d,n_i).
\]
\end{lemma}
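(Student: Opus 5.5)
The plan is to reduce the count to a single cyclic factor, using the fact that the order of an element of a direct sum is the least common multiple of the orders of its components. Fix an isomorphism \(G\cong C_{n_1}\oplus\cdots\oplus C_{n_r}\); the quantity \(T_G(d)\) is an isomorphism invariant, so we may compute it in the direct sum. For \(x=(x_1,\ldots,x_r)\) we have \(dx=0\) if and only if \(dx_i=0\) in \(C_{n_i}\) for every \(i\). This is just the componentwise definition of the group operation. Hence
\[
 \{x\in G:\ord(x)\mid d\}=\prod_{i=1}^r\{y\in C_{n_i}:\ord(y)\mid d\},
\]
and the count is multiplicative:
\[
 T_G(d)=\prod_{i=1}^r T_{C_{n_i}}(d).
\]

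It then remains to show \(T_{C_n}(d)=\gcd(d,n)\) for a cyclic group \(C_n\cong\Z/n\Z\). There are two routes, and I would use the first as the main argument.

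The first route is direct. The elements with \(dy=0\) are the residues \(y\) with \(n\mid dy\). Writing \(g=\gcd(d,n)\), this condition is equivalent to \(n/g\mid (d/g)y\). Since \(\gcd(d/g,n/g)=1\), it is further equivalent to \(n/g\mid y\). The multiples of \(n/g\) in \(\Z/n\Z\) are exactly \(g\) residues, so \(T_{C_n}(d)=g\).

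The second route uses the order distribution already recorded in the proof of Theorem~\ref{thm:cyclic-exact}: for each \(e\mid n\) there are exactly \(\varphi(e)\) elements of order \(e\). An element's order divides \(d\) exactly when it divides \(\gcd(d,n)\). So
\[
 T_{C_n}(d)=\sum_{e\mid \gcd(d,n)}\varphi(e)=\gcd(d,n),
\]
by Gauss's identity \(\sum_{e\mid N}\varphi(e)=N\).

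There is no real obstacle here. The only point needing care is the elementary number-theoretic step \(n\mid dy \iff (n/g)\mid y\), and the second route gives an independent check of it. The divisibility hypothesis \(n_1\mid\cdots\mid n_r\) plays no role: the formula holds for any decomposition into cyclic factors.
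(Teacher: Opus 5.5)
Your proposal is correct and takes essentially the same approach as the paper: it reduces the condition \(\ord(x)\mid d\) componentwise to the cyclic factors and multiplies the counts \(\gcd(d,n_i)\). The only difference is that the paper quotes the cyclic count \(\gcd(d,n)\) as a standard fact, while you prove it. You also correctly observe that the chain condition \(n_1\mid\cdots\mid n_r\) is never used.
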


\begin{proof}
Write \(x=(x_1,\ldots,x_r)\) with \(x_i\in C_{n_i}\).  The condition \(\ord(x)\mid d\) is equivalent to \(x_i^d=1\) for all \(i\).  In a cyclic group of order \(n_i\), the equation \(y^d=1\) has exactly \(\gcd(d,n_i)\) solutions.  Multiplying over the direct factors gives the formula.
\end{proof}

\begin{theorem}\label{thm:abelian-formula}
Let
\[
 G\cong C_{n_1}\oplus\cdots\oplus C_{n_r},
 \qquad n_1\mid n_2\mid\cdots\mid n_r,
\]
and let \(e=n_r=\Exp(G)\).  For each divisor \(d\mid e\), let
\[
 a_G(d)=|\{x\in G:\ord(x)=d\}|.
\]
Then
\[
 a_G(d)=\sum_{c\mid d}\mu\!\left(\frac dc\right)\prod_{i=1}^r\gcd(c,n_i).
\]
Consequently,
\[
 \Ccal(G)=\max\left\{m\in\N:\sum_{\substack{d\mid e\\ d>m}}a_G(d)\ge m\right\}.
\]
\end{theorem}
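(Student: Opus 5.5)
The argument combines Lemma~\ref{lem:dividing-order-count} with M\"obius inversion over the divisor lattice, and then feeds the result into Proposition~\ref{prop:order-distribution}.

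The key identity is that, for every positive integer \(c\),
\[
 T_G(c)=\sum_{d\mid c}a_G(d).
\]
This holds because an element with \(\ord(x)\mid c\) has some exact order \(d\), and that \(d\) divides \(c\). Here \(a_G(d)\) is understood for all positive integers \(d\), and it vanishes when \(d\nmid e\), since every element order divides the exponent. So \(T_G=a_G*\mathbf 1\) as arithmetic functions on the positive integers.

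Restricted to the divisors of any fixed \(d\), this is a finite system, so classical M\"obius inversion applies. It gives
\[
 a_G(d)=\sum_{c\mid d}\mu(d/c)\,T_G(c).
\]
Substituting \(T_G(c)=\prod_i\gcd(c,n_i)\) from Lemma~\ref{lem:dividing-order-count} yields the displayed formula for every \(d\mid e\).

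For the consequence, note that every element order divides \(e\). Hence
\[
 N_G(m)=\sum_{d\mid e,\ d>m}a_G(d),
\]
and Proposition~\ref{prop:order-distribution} gives the formula for \(\Ccal(G)\) directly.

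There is no real obstacle here. The only point needing care is that the inversion is applied on the full divisor poset of \(d\), with \(T_G\) defined for every divisor \(c\) of \(d\). This is automatic because \(c\mid d\mid e\).
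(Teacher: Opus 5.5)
Your proposal is correct and follows essentially the same route as the paper: you write \(T_G(c)=\sum_{d\mid c}a_G(d)\), apply M\"obius inversion on the divisor lattice, substitute \(T_G(c)=\prod_i\gcd(c,n_i)\) from Lemma~\ref{lem:dividing-order-count}, and then express \(N_G(m)\) as \(\sum_{d\mid e,\,d>m}a_G(d)\) so that Proposition~\ref{prop:order-distribution} applies. Your extra remark that inversion is carried out on the downward-closed set of divisors of a fixed \(d\) is a harmless clarification of a point the paper leaves implicit.
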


\begin{proof}
For every \(d\mid e\),
\[
 T_G(d)=\sum_{c\mid d}a_G(c).
\]
Here \(\mu\) denotes the classical number-theoretic Möbius function.  Möbius inversion on the divisor lattice, together with Lemma~\ref{lem:dividing-order-count}, gives the displayed formula for \(a_G(d)\); see, for example, \cite[Ch.~3]{Stanley2011}.  Since all element orders in \(G\) divide \(e\), we have
\[
 N_G(m)=\sum_{\substack{d\mid e\\ d>m}}a_G(d).
\]
The result follows from Proposition~\ref{prop:order-distribution}.
\end{proof}

\begin{theorem}\label{thm:pgroup}
Let \(G\) be a finite abelian \(p\)-group of exponent \(p^\ell\).  Then
\[
 \Ccal(G)=
 \begin{cases}
 \varphi(p^\ell)=p^\ell-p^{\ell-1}, & \text{if }G\text{ is cyclic},\\[3pt]
 p^\ell-1, & \text{if }G\text{ is noncyclic}.
 \end{cases}
\]
\end{theorem}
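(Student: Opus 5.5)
The cyclic case is immediate from Corollary~\ref{cor:prime-power-cyclic}, since a cyclic abelian \(p\)-group of exponent \(p^\ell\) is \(\Z/p^\ell\Z\), and so \(\Ccal(G)=\varphi(p^\ell)=p^\ell-p^{\ell-1}\). For the noncyclic case I will apply Corollary~\ref{cor:max-order-criterion} with \(e=p^\ell\). It gives \(\Ccal(G)=p^\ell-1\) as soon as \(G\) has at least \(p^\ell-1\) elements of order exactly \(p^\ell\). The upper bound \(\Ccal(G)\le e-1\) is built into that corollary, so only this counting inequality needs proof.

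To count these elements, write \(G\cong C_{p^{a_1}}\oplus\cdots\oplus C_{p^{a_r}}\) with \(a_1\le\cdots\le a_r=\ell\). Noncyclicity means \(r\ge2\) with \(a_{r-1}\ge1\). In a \(p\)-group the elements of order exactly \(p^\ell\) are those whose order divides \(p^\ell\) but not \(p^{\ell-1}\). Lemma~\ref{lem:dividing-order-count} then gives
\[
 a_G(p^\ell)=|G|-\prod_{i=1}^r p^{\min(a_i,\ell-1)}.
\]
This is the Möbius formula of Theorem~\ref{thm:abelian-formula}, where only \(c=p^\ell\) and \(c=p^{\ell-1}\) contribute.

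The key step is to show \(a_G(p^\ell)\ge p^\ell-1\). I will bound it from below by passing to a subgroup. Take the two largest factors, \(H=C_{p^{a}}\oplus C_{p^\ell}\) with \(1\le a\le\ell\). An element of \(H\) whose second coordinate is a generator of \(C_{p^\ell}\) has order \(p^\ell\). There are \(p^a\varphi(p^\ell)\) such elements, so
\[
 a_G(p^\ell)\ge p\,(p^\ell-p^{\ell-1})=p^{\ell-1}\cdot p(p-1)\cdot\tfrac{1}{p}\cdot p .
\]
More simply, \(p(p^\ell-p^{\ell-1})=p^\ell(p-1)\ge p^\ell>p^\ell-1\), using \(p\ge2\). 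This already suffices, so the lower bound is easy.

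The only real obstacle is making sure the noncyclic hypothesis is used correctly, namely that a second nontrivial invariant factor \(a_{r-1}\ge1\) exists. Once that is in place, the corollary gives \(\Ccal(G)=p^\ell-1\).
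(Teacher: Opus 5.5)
Your proposal is correct and follows essentially the same route as the paper. The cyclic case comes from Corollary~\ref{cor:prime-power-cyclic}; in the noncyclic case, a subgroup $C_{p^a}\oplus C_{p^\ell}$ with $a\ge 1$ supplies at least $p\,\varphi(p^\ell)=p^\ell(p-1)\ge p^\ell-1$ elements of order $p^\ell$, and the upper bound comes from the exponent. You package that last step via Corollary~\ref{cor:max-order-criterion}, whereas the paper argues it inline. Your first displayed chain of equalities is correct but convoluted and can be replaced by $p(p^\ell-p^{\ell-1})=p^\ell(p-1)$; the M\"obius formula for $a_G(p^\ell)$ is also correct, though not needed.
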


\begin{proof}
If \(G\) is cyclic, this is Corollary~\ref{cor:prime-power-cyclic}.

Assume now that \(G\) is noncyclic.  Since \(\Exp(G)=p^\ell\), no Chowla set can have cardinality at least \(p^\ell\).  Hence
\[
 \Ccal(G)\le p^\ell-1.
\]
It remains to prove the reverse inequality.

By the classification of finite abelian \(p\)-groups, \(G\) has a direct factor isomorphic to \(C_{p^\ell}\).  Since \(G\) is not cyclic, it has another nontrivial cyclic direct factor, say \(C_{p^k}\) with \(k\ge 1\).  The latter contains a subgroup isomorphic to \(C_p\).  Therefore \(G\) contains a subgroup
\[
 H\cong C_{p^\ell}\oplus C_p .
\]
Every element of \(H\) whose first coordinate has order \(p^\ell\) has order \(p^\ell\).  There are
\[
 p\varphi(p^\ell)=p(p^\ell-p^{\ell-1})=p^\ell(p-1)
\]
such elements.  Since \(p^\ell(p-1)\ge p^\ell-1\), the group \(G\) contains at least \(p^\ell-1\) elements of order \(p^\ell\).  Choosing any \(p^\ell-1\) of them gives a Chowla set.  Thus
\[
 \Ccal(G)\ge p^\ell-1,
\]
and equality follows.
\end{proof}

\begin{corollary}\label{cor:elementary-abelian}
For every prime \(p\) and every integer \(r\ge 1\),
\[
 \Ccal((C_p)^r)=p-1.
\]
\end{corollary}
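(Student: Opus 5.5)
The plan is to read the statement off Theorem~\ref{thm:pgroup} directly. The group \((C_p)^r\) is a finite abelian \(p\)-group. It has exponent \(p\), so \(\ell=1\) in the notation there. It is cyclic exactly when \(r=1\) and noncyclic when \(r\ge 2\).

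In the cyclic case \(r=1\), Theorem~\ref{thm:pgroup} (equivalently Corollary~\ref{cor:prime-power-cyclic}) gives
\[
\Ccal(C_p)=\varphi(p)=p-p^{0}=p-1.
\]
In the noncyclic case \(r\ge 2\), the same theorem gives
\[
\Ccal((C_p)^r)=p^{1}-1=p-1.
\]
The two branches of the formula coincide when \(\ell=1\), since \(p^{\ell}-p^{\ell-1}=p-1=p^\ell-1\). This yields \(\Ccal((C_p)^r)=p-1\) uniformly in \(r\).

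As a sanity check independent of Theorem~\ref{thm:pgroup}, one can also use Corollary~\ref{cor:max-order-criterion} with \(e=p\). Every nonidentity element of \((C_p)^r\) has order \(p\), so there are \(p^r-1\ge p-1\) elements of order \(e\). That corollary then gives \(\Ccal=p-1\) at once.

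There is no real obstacle here. The only point needing care is the cyclic case \(r=1\), where one must note that \(\varphi(p)=p-1\) agrees with the noncyclic value.
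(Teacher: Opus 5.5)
Your proposal is correct and follows the paper's proof, which simply specializes Theorem~\ref{thm:pgroup} to \(\ell=1\), where the cyclic and noncyclic values coincide at \(p-1\). Your independent check via Corollary~\ref{cor:max-order-criterion} is also valid and slightly more direct: all \(p^r-1\ge p-1\) nonidentity elements have order \(p\).
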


\begin{proof}
This is the case \(\ell=1\) of Theorem~\ref{thm:pgroup}.
\end{proof}

\section{Chowla subspaces in finite extensions}\label{sec:chowla_subspaces_in_finite_extensions}

We now pass from element orders to degrees of generated subextensions. The
resulting condition depends on the dimension of the subspace and therefore
retains more information than the requirement that all nonzero elements
generate the entire extension.

\begin{definition}
Let \(K\subseteq L\) be a finite extension with \([L:K]=n>1\). A nonzero
\(K\)-subspace \(A\subseteq L\) is called a \emph{Chowla subspace} if
\[
 [K(a):K]>\dim_K A
 \qquad\text{for every }0\ne a\in A.
\]
Define
\[
 \Ccal(L/K)=
 \max\{\dim_K A:A\subseteq L\text{ is a Chowla }K\text{-subspace}\}.
\]
Finally, set
\[
 d_{\max}(L/K)=
 \max\{[E:K]:K\subseteq E\subsetneq L\}.
\]
\end{definition}

\begin{proposition}[Degree-threshold characterization]\label{prop:degree-threshold}
Let \(A\subseteq L\) be a nonzero \(K\)-subspace and put
\(m=\dim_K A\). Then \(A\) is a Chowla subspace if and only if
\[
 A\cap E=\{0\}
\]
for every intermediate field \(E\) satisfying \([E:K]\le m\).
\end{proposition}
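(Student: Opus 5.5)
The plan is to prove both implications by translating the degree condition on individual elements into a statement about the intermediate fields they generate. For a nonzero \(a\in L\), the field \(K(a)\) is an intermediate field with \(K\subseteq K(a)\subseteq L\). Its degree \([K(a):K]\) is finite because \(L/K\) is finite. The whole argument rests on the following observation: \(a\) lies in an intermediate field \(E\) if and only if \(K(a)\subseteq E\), because \(K(a)\) is the smallest subfield of \(L\) containing \(K\) and \(a\).

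\emph{Chowla implies trivial intersections.} Suppose \(A\) is a Chowla subspace. Let \(E\) be an intermediate field with \([E:K]\le m\), and suppose some \(0\ne a\in A\cap E\) existed. Then \(K(a)\subseteq E\). The tower law gives \([K(a):K]\le [E:K]\le m=\dim_K A\), since \([E:K]=[E:K(a)]\,[K(a):K]\). This contradicts the Chowla condition at \(a\), so \(A\cap E=\{0\}\).

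\emph{Trivial intersections imply Chowla.} Suppose \(A\cap E=\{0\}\) for every intermediate field \(E\) with \([E:K]\le m\). Take any \(0\ne a\in A\) and put \(E=K(a)\). This is an intermediate field, and \(a\in A\cap E\) with \(a\ne 0\). By the hypothesis, \(E\) cannot satisfy \([E:K]\le m\). Hence \([K(a):K]>m=\dim_K A\), which is exactly the Chowla condition at \(a\).

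There is no substantial obstacle. The only points needing care are:
\begin{itemize}[nosep]
\item the minimality of \(K(a)\) among subfields of \(L\) containing \(K\) and \(a\);
\item the tower law, to get the monotonicity of degrees under inclusion of intermediate fields.
\end{itemize}
Both are standard and need no separability hypothesis, so the characterization holds for every finite extension.
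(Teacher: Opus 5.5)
Your proof is correct and follows essentially the same route as the paper. The forward direction uses \(K(a)\subseteq E\) together with monotonicity of degrees. The reverse direction applies the hypothesis to \(E=K(a)\), which the paper phrases contrapositively; your observation that no separability is needed matches the paper's statement for arbitrary finite extensions.
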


\begin{proof}
Suppose first that \(A\) is Chowla. If \(0\ne a\in A\cap E\), then
\([K(a):K]\le [E:K]\le m\), contradicting the defining inequality.
Conversely, if \(A\) is not Chowla, then there is a nonzero \(a\in A\) with
\([K(a):K]\le m\). The intermediate field \(E=K(a)\) then satisfies
\(0\ne a\in A\cap E\), so the intersection condition fails.
\end{proof}

\begin{remark}\label{rem:chowla-not-full-generation}
The Chowla condition does not generally require every nonzero element to
generate \(L\). For example, suppose \(L/K\) is finite separable and has a
proper intermediate field \(E\) of degree \(r>1\). Choose \(a\in E\) with
\(E=K(a)\). Then the one-dimensional space \(Ka\) is Chowla because
\([K(a):K]=r>1\), although \(K(a)=E\ne L\).
\end{remark}

\begin{theorem}\label{thm:linear-upper-bound}
Let \(K\subseteq L\) be a finite extension of degree \(n>1\). Then
\[
 \Ccal(L/K)\le n-d_{\max}(L/K).
\]
\end{theorem}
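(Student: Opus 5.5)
Let \(A\) be a Chowla subspace with \(m=\dim_K A\), and let \(E\) be a proper intermediate field with \([E:K]=d_{\max}(L/K)=:d\). Note that \(d\ge 1\), since \(E=K\) is allowed. The plan is to show \(m\le n-d\) by a dimension count, split according to whether \(m\) is at least \(d\) or less than \(d\).

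\emph{Case \(m\ge d\).} Here \([E:K]=d\le m\), so Proposition~\ref{prop:degree-threshold} gives \(A\cap E=\{0\}\). Both \(A\) and \(E\) are \(K\)-subspaces of \(L\), hence
\[
 m+d=\dim_K A+\dim_K E=\dim_K(A+E)\le n .
\]
This is exactly \(m\le n-d\).

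\emph{Case \(m<d\).} We need \(m\le n-d\) here as well. Since \(E\subsetneq L\), the tower law gives \(n=[L:E]\,d\) with \([L:E]\ge 2\), so \(n\ge 2d\). Therefore
\[
 n-d\ge d>m .
\]

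In both cases \(m\le n-d_{\max}(L/K)\). Taking the maximum over all Chowla subspaces gives \(\Ccal(L/K)\le n-d_{\max}(L/K)\).

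There is no real obstacle. The one point that needs care is that the intersection argument only applies once \(m\ge d\), and the case \(m<d\) must be handled separately using \(d\le n/2\). Neither case uses separability, so the bound holds for every finite extension.
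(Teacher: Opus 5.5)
Your proof is correct and uses the same two ingredients as the paper: the tower-law bound \(d\le n/2\) and the dimension count for \(A\cap E\). The paper simply packages them as a single contradiction instead of your case split on \(m\ge d\) versus \(m<d\): assuming \(m>n-d\) forces \(A\cap E\ne\{0\}\), and this produces an element of degree at most \(d\le n-d<m\).
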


\begin{proof}
Let \(A\subseteq L\) be a Chowla subspace and put \(m=\dim_K A\). Choose a
proper intermediate field \(E\) with \([E:K]=d_{\max}(L/K)\), and abbreviate
this degree by \(d\). Since \([L:E]\ge2\), we have \(d\le n/2\), and hence
\(n-d\ge d\).

If \(m>n-d\), then
\[
 \dim_K(A\cap E)\ge m+d-n>0.
\]
Choose \(0\ne a\in A\cap E\). Then
\[
 [K(a):K]\le d\le n-d<m,
\]
contradicting the Chowla condition. Therefore \(m\le n-d\).
\end{proof}

The lower bound requires constructing a subspace that avoids all relevant
intermediate fields. Over an infinite base field, this follows from two
standard finite-dimensional facts.

\begin{lemma}\label{lem:finitely-many-intermediate-fields}
If \(K\subseteq L\) is finite and separable, then there are only finitely many
intermediate fields between \(K\) and \(L\).
\end{lemma}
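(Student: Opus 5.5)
The plan is to pass to the Galois closure and use the Galois correspondence. Since \(L/K\) is finite and separable, it lies in a finite Galois extension \(M/K\). Let \(G=\Gal(M/K)\), which is a finite group, and let \(H=\Gal(M/L)\). By the fundamental theorem of Galois theory, the fields \(E\) with \(K\subseteq E\subseteq M\) correspond bijectively to the subgroups of \(G\), via \(E\mapsto \Gal(M/E)\). Under this correspondence, the intermediate fields of \(L/K\) are exactly those \(E\subseteq L\), and these correspond to the subgroups of \(G\) containing \(H\). A finite group has only finitely many subgroups, so there are only finitely many intermediate fields between \(K\) and \(L\).

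The only points to verify are two standard facts:
\begin{enumerate}[label=(\roman*)]
\item The Galois closure exists. Write \(L=K(\alpha_1,\dots,\alpha_s)\) with each \(\alpha_i\) separable over \(K\), and take \(M\) to be the splitting field over \(K\) of the product of the minimal polynomials of the \(\alpha_i\). This \(M\) is normal and separable over \(K\), hence Galois, and it contains \(L\).
\item The map \(E\mapsto\Gal(M/E)\) is injective, since \(E\) is recovered as the fixed field \(M^{\Gal(M/E)}\).
\end{enumerate}

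Neither step is a real obstacle. The argument amounts to citing the fundamental theorem, and the only care needed is to note that separability is what makes \(M/K\) Galois. As a fallback, one could instead invoke the primitive element theorem, \(L=K(\theta)\), and Artin's argument. Each intermediate field \(E\) is generated over \(K\) by the coefficients of the minimal polynomial of \(\theta\) over \(E\). That polynomial is a monic factor of the minimal polynomial of \(\theta\) over \(K\) in \(M[x]\), and there are only finitely many such factors. Either route gives finiteness.
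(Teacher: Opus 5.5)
Your main argument is the same as the paper's: embed \(L\) in a normal closure \(M\), so that \(M/K\) is finite Galois, and observe that \(E\mapsto\Gal(M/E)\) injects the intermediate fields of \(L/K\) into the finite set of subgroups of \(\Gal(M/K)\); your items (i) and (ii) just spell out what the paper cites from the Galois correspondence. The primitive-element fallback is not needed, and it would be circular if your proof of the primitive element theorem itself relies on finiteness of intermediate fields, as some textbook treatments do.
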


\begin{proof}
Let \(M\) be a normal closure of \(L/K\). Then \(M/K\) is finite Galois. By
the finite Galois correspondence \cite[Sec.~14.2]{DummitFoote2004}, the map
\[
 E\longmapsto \Gal(M/E)
\]
embeds the set of intermediate fields \(K\subseteq E\subseteq L\) into the
finite set of subgroups of \(\Gal(M/K)\).
\end{proof}

\begin{lemma}\label{lem:finite-union-subspaces}
Let \(K\) be an infinite field and let \(V\) be a finite-dimensional
\(K\)-vector space. Then \(V\) is not a finite union of proper
\(K\)-subspaces.
\end{lemma}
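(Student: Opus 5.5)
We argue by contradiction and by induction on the number of proper subspaces in the union. Suppose \(V=W_1\cup\cdots\cup W_k\) with each \(W_i\subsetneq V\), and choose such a representation with \(k\) minimal. Clearly \(k\ge 2\), since a single proper subspace cannot equal \(V\). By minimality, no \(W_i\) is contained in the union of the others. In particular there is a vector \(u\in W_1\) with \(u\notin W_2\cup\cdots\cup W_k\). Since \(W_1\ne V\), there is also a vector \(v\in V\setminus W_1\).

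The key step is to consider the affine line \(\{v+tu:t\in K\}\). Because \(K\) is infinite, this line has infinitely many distinct points, as \(u\ne 0\). No point of the line lies in \(W_1\): if \(v+tu\in W_1\), then \(v=(v+tu)-tu\in W_1\), a contradiction. Now fix \(i\ge 2\). The subspace \(W_i\) contains at most one point of the line. Indeed, if \(v+tu\) and \(v+su\) both lie in \(W_i\) with \(t\ne s\), then their difference \((t-s)u\) lies in \(W_i\), and hence \(u\in W_i\), which is again a contradiction.

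Thus \(W_1\cup\cdots\cup W_k\) meets the line in at most \(k-1\) points, while the line has infinitely many points. So some point of the line lies outside the union, contradicting \(V=\bigcup_i W_i\).

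None of these steps is a real obstacle. The only points needing care are choosing \(u\) outside the other subspaces, which is exactly where minimality of \(k\) enters, and using that \(|K|\) exceeds \(k-1\). An alternative route would be to take linear functionals \(f_i\) vanishing on the \(W_i\) and note that the nonzero polynomial \(\prod_i f_i\) does not vanish identically on \(K^n\) when \(K\) is infinite. However, the line argument above is more elementary, and it is the one I would write.
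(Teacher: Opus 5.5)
Your proof is correct, and it takes a genuinely different route from the paper's.

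The paper inducts on $\dim_K V$. For $\dim_K V\ge 2$ it picks a hyperplane $H$ different from each $U_i$ that happens to be a hyperplane, using without further comment that an infinite field gives infinitely many hyperplanes. It observes that each $H\cap U_i$ is then a proper subspace of $H$, and applies the induction hypothesis to the cover $H=\bigcup_i (H\cap U_i)$.

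You instead take a cover with the minimal number $k$ of subspaces. Minimality gives you $u\in W_1\setminus\bigcup_{i\ge 2}W_i$, and $u\neq 0$ because $0\in W_2$. You then count points on the affine line $v+Ku$: it misses $W_1$ and meets each other $W_i$ at most once.

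Your route buys several things:
\begin{itemize}
\item It never uses finite-dimensionality, so it proves the statement for arbitrary vector spaces.
\item It is self-contained. It in effect contains the proof of the ``infinitely many hyperplanes'' fact that the paper takes for granted.
\item It isolates exactly what is needed about $K$, namely $|K|\ge k$. Hence a nonzero vector space over $\F_q$ is never a union of $q$ or fewer proper subspaces. This is sharp, since $\F_q^2$ is the union of its $q+1$ lines.
\end{itemize}

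The paper's induction is equally short and geometric. It is fully adequate for its only use, Theorem~\ref{thm:infinite-sharp}, where $L$ is finite-dimensional over an infinite field $K$.
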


\begin{proof}
We argue by induction on \(\dim_K V\). The one-dimensional case is immediate.
Suppose \(\dim_K V\ge2\) and
\(V=U_1\cup\cdots\cup U_t\), with each \(U_i\) proper. Choose a hyperplane
\(H\) different from every \(U_i\) that is itself a hyperplane; this is
possible because \(K\) is infinite. Then every \(H\cap U_i\) is a proper
subspace of \(H\), while
\[
 H=\bigcup_{i=1}^t(H\cap U_i),
\]
contradicting the induction hypothesis.
\end{proof}

\begin{theorem}[Exact formula over infinite fields]\label{thm:infinite-sharp}
Let \(K\subseteq L\) be a finite separable extension of degree \(n>1\), and
assume that \(K\) is infinite. Then
\[
 \boxed{\Ccal(L/K)=n-d_{\max}(L/K).}
\]
\end{theorem}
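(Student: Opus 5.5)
The upper bound \(\Ccal(L/K)\le n-d_{\max}(L/K)\) is Theorem~\ref{thm:linear-upper-bound}, so only the reverse inequality needs proof. Put \(d=d_{\max}(L/K)\) and \(m=n-d\). We will construct an \(m\)-dimensional subspace \(A\subseteq L\) with \(A\cap E=\{0\}\) for every proper intermediate field \(E\). This is stronger than the criterion in Proposition~\ref{prop:degree-threshold}, because every intermediate field with \([E:K]\le m<n\) is proper. Note also that \(m\ge 1\), since \(d<n\). By Lemma~\ref{lem:finitely-many-intermediate-fields}, the proper intermediate fields form a finite list \(E_1,\dots,E_t\), and each has \(\dim_K E_j\le d\).

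We build \(A\) by induction, choosing linearly independent vectors \(v_1,\dots,v_m\) one at a time. The invariant is that \(A_k=\mathrm{span}(v_1,\dots,v_k)\) satisfies \(A_k\cap E_j=\{0\}\) for all \(j\); the base case \(A_0=\{0\}\) is trivial. Suppose \(A_k\) has been built with \(k<m\). A vector \(v\) is a good extension if \(A_k+Kv\) still meets every \(E_j\) trivially. Because \(A_k\cap E_j=\{0\}\), this fails exactly when some \(a+\lambda v\in E_j\setminus\{0\}\) with \(a\in A_k\), which forces \(\lambda\ne 0\). So the bad vectors are precisely those in \(A_k+E_j\) for some \(j\). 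Requiring \(v\notin A_k\) adds nothing, since \(A_k\subseteq A_k+E_j\).

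It is therefore enough to show that \(L\) is not the union of the finitely many subspaces \(A_k+E_j\). Each of these has dimension at most \(k+d\le m-1+d=n-1\), so each is proper. Lemma~\ref{lem:finite-union-subspaces} then supplies a good \(v\), using that \(K\) is infinite. After \(m\) steps, \(A=A_m\) is a Chowla subspace of dimension \(n-d\), which gives \(\Ccal(L/K)\ge n-d_{\max}(L/K)\).

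The argument is short, and the step that needs care is the dimension count. Only at the last step \(k=m-1\) is the bound \(\dim(A_k+E_j)\le n-1\) tight. This is exactly why the construction stops at dimension \(n-d_{\max}\), and it matches the upper bound. Two further hypotheses are essential. Separability is used only through the finiteness of the set of intermediate fields. The infinitude of \(K\) is used only in the avoidance lemma, which is why the finite-field case will need a separate argument.
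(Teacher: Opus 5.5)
Your proof is correct and follows the paper's argument essentially step for step. You take the upper bound from Theorem~\ref{thm:linear-upper-bound}, then build \(A_0\subset\cdots\subset A_m\) inductively, at each stage using Lemma~\ref{lem:finite-union-subspaces} to choose \(v\) outside the finitely many proper subspaces \(A_k+E_j\), each of dimension at most \(n-1\). The only cosmetic difference is that you avoid all proper intermediate fields rather than just the maximal ones, which changes nothing since both lists are finite.
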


\begin{proof}
The upper bound is Theorem~\ref{thm:linear-upper-bound}. Put
\[
 d=d_{\max}(L/K),\qquad m=n-d.
\]
Let \(E_1,\ldots,E_t\) be the maximal proper intermediate fields. By
Lemma~\ref{lem:finitely-many-intermediate-fields}, there are finitely many,
and every proper intermediate field is contained in one of them.

We construct subspaces
\[
 A_0\subset A_1\subset\cdots\subset A_m
\]
with \(\dim_K A_j=j\) and \(A_j\cap E_i=\{0\}\) for every \(i\). Begin with
\(A_0=\{0\}\). Suppose \(A_j\) has been constructed for some \(j<m\). For
each \(i\),
\[
 \dim_K(E_i+A_j)\le d+j\le d+m-1=n-1,
\]
so \(E_i+A_j\) is proper. Lemma~\ref{lem:finite-union-subspaces} provides
\[
 v\in L\setminus\bigcup_{i=1}^t(E_i+A_j).
\]
Set \(A_{j+1}=A_j\oplus Kv\). If \(a+cv\in A_{j+1}\cap E_i\), where
\(a\in A_j\), then \(c\ne0\) would imply \(v\in E_i+A_j\). Hence \(c=0\),
and then \(a=0\). Thus \(A_{j+1}\cap E_i=\{0\}\) for every \(i\).

Let \(A=A_m\). Every nonzero \(a\in A\) lies in no proper intermediate
field, so \(K(a)=L\). Therefore
\[
 [K(a):K]=n>m=\dim_K A,
\]
and \(A\) is Chowla. Hence \(\Ccal(L/K)\ge m\), completing the proof.
\end{proof}

\begin{remark}\label{rem:extremal-full-degree}
Let \(d=d_{\max}(L/K)\). Since \(d\le n/2\), the extremal dimension
\(n-d\) is at least \(d\). Consequently, any Chowla subspace of dimension
\(n-d\) automatically satisfies \(K(a)=L\) for every \(0\ne a\in A\): a
proper generated subextension would have degree at most \(d\le n-d\). This
is a consequence of extremality and is not part of the definition of a Chowla
subspace.
\end{remark}

\begin{corollary}\label{cor:prime-degree-infinite}
If \(K\subseteq L\) is a finite separable extension of prime degree \(n\) and
\(K\) is infinite, then
\[
 \Ccal(L/K)=n-1.
\]
\end{corollary}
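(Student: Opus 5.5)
The corollary should follow at once from Theorem~\ref{thm:infinite-sharp}, once we compute \(d_{\max}(L/K)\) for an extension of prime degree.

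First, take any proper intermediate field \(K\subseteq E\subsetneq L\). By the tower law, \([E:K]\) divides \([L:K]=n\). Because \(n\) is prime, \([E:K]\) equals \(1\) or \(n\). The value \(n\) would force \(E=L\), which is excluded, so \([E:K]=1\) and \(E=K\). The field \(K\) is itself a proper intermediate field because \(n>1\). Hence \(d_{\max}(L/K)=1\).

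Next, the hypotheses of Theorem~\ref{thm:infinite-sharp} hold: \(L/K\) is finite separable of degree \(n>1\) and \(K\) is infinite. That theorem then gives \(\Ccal(L/K)=n-d_{\max}(L/K)=n-1\).

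Both steps are immediate, so I expect no real obstacle. The only point needing care is that \(d_{\max}\) is a maximum over a nonempty set, and this is settled by \(K\subsetneq L\). As a check on the construction, the Chowla subspace produced in the theorem can also be described directly. The only proper intermediate field is \(K\), so any \((n-1)\)-dimensional complement of \(K\) in \(L\) meets \(K\) trivially. Every nonzero element of such a complement generates \(L\), and therefore has degree \(n>n-1\).
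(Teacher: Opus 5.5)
Your proof is correct and matches the paper's: both observe that prime degree leaves \(K\) as the only proper intermediate field, so \(d_{\max}(L/K)=1\), and then invoke Theorem~\ref{thm:infinite-sharp}. Your closing remark about complements of \(K\) is also right. Combined with Theorem~\ref{thm:linear-upper-bound}, it shows that \(\Ccal(L/K)=n-1\) holds for every extension of prime degree, without using separability or the infiniteness of \(K\).
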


\begin{proof}
There is no proper intermediate field strictly larger than \(K\), so
\(d_{\max}(L/K)=1\). Apply Theorem~\ref{thm:infinite-sharp}.
\end{proof}

\section{Finite fields}

Throughout this section, \(q\) and \(Q\) denote prime powers. We regard
\(\F_{q^n}\) as an \(n\)-dimensional vector space over \(\F_q\). The
intermediate fields of \(\F_{q^n}/\F_q\) are precisely the fields
\(\F_{q^e}\) with \(e\mid n\), and
\[
 \F_{q^a}\cap\F_{q^b}=\F_{q^{\gcd(a,b)}}.
\]
We also use the normal basis theorem; see \cite[Ch.~2]{LidlNiederreiter1997}.

\subsection{The exact extremal formula}

The upper bound from Theorem~\ref{thm:linear-upper-bound} is sharp over every
finite field. The proof below is explicit and works for arbitrary extension
degree.

\begin{theorem}[Finite-field formula]\label{thm:finite-field-exact}
Let \(n>1\), let \(p\) be the least prime divisor of \(n\), and let \(q\) be a
prime power. Then
\[
 \boxed{\Ccal(\F_{q^n}/\F_q)=n-\frac np.}
\]
\end{theorem}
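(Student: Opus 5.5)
The upper bound is already available: Theorem~\ref{thm:linear-upper-bound} gives \(\Ccal(\F_{q^n}/\F_q)\le n-d_{\max}\). Since the intermediate fields are the \(\F_{q^e}\) with \(e\mid n\), the largest proper one has \(e=n/p\), where \(p\) is the least prime divisor of \(n\). Hence \(d_{\max}(\F_{q^n}/\F_q)=n/p\) and \(\Ccal\le n-n/p\). It remains to construct a Chowla subspace of dimension \(m=n-n/p\). As in the proof of Theorem~\ref{thm:infinite-sharp}, I will make every nonzero element of \(A\) generate \(\F_{q^n}\); then \([K(a):K]=n>m\) for all \(0\ne a\in A\).

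First I fix coordinates. By the normal basis theorem, choose \(\alpha\in\F_{q^n}\) such that \(\alpha,\sigma\alpha,\ldots,\sigma^{n-1}\alpha\) is an \(\F_q\)-basis, where \(\sigma(x)=x^q\) is the Frobenius. Write \(x=\sum_{i\in\Z/n\Z}c_i\sigma^i\alpha\) with \(c_i\in\F_q\). Then \(\sigma^e x=\sum_i c_{i-e}\sigma^i\alpha\). Also \(x\in\F_{q^e}\) if and only if \(\sigma^e x=x\), that is, \(x^{q^e}=x\). So for \(e\mid n\), membership in \(\F_{q^e}\) is equivalent to the coefficient vector being periodic with period \(e\): \(c_{i+e}=c_i\) for all \(i\in\Z/n\Z\).

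Next comes the construction. Define
\[
 A=\Bigl\{\sum_{i=0}^{n-1}c_i\sigma^i\alpha:\ c_0=c_1=\cdots=c_{n/p-1}=0\Bigr\},
\]
which has dimension \(n-n/p\). Suppose \(0\ne x\in A\cap\F_{q^e}\) for some proper divisor \(e\) of \(n\). Then \(e\le n/p\), and the coefficient vector of \(x\) is \(e\)-periodic. The vector is therefore determined by \(c_0,\ldots,c_{e-1}\), all of which vanish because \(e\le n/p\). So \(x=0\), a contradiction. Hence every nonzero \(a\in A\) lies in no proper subfield, so \(\F_q(a)=\F_{q^n}\) and \([\F_q(a):\F_q]=n>m\). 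Thus \(A\) is Chowla, and together with the upper bound this gives the equality.

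I expect no serious obstacle. The one point needing care is the translation of subfield membership into periodicity of normal-basis coordinates, including the indexing convention for \(\sigma^e\) acting on the coordinates. The bound \(e\le n/p\) for every proper divisor \(e\) is exactly what makes a single block of \(n/p\) forced zeros sufficient.
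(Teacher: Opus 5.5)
Your proposal is correct and follows essentially the same route as the paper. Both take the upper bound from Theorem~\ref{thm:linear-upper-bound} with \(d_{\max}=n/p\), then use a normal-basis subspace with a block of \(n/p\) consecutive coordinates forced to vanish, which periodicity of coordinates for elements of \(\F_{q^e}\), \(e\le n/p\), reduces to zero. The only cosmetic difference is that the paper zeroes the last \(n/p\) coordinates rather than the first.
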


\begin{proof}
The largest proper divisor of \(n\) is \(d=n/p\), so the largest proper
intermediate field has degree \(d\). Theorem~\ref{thm:linear-upper-bound}
gives
\[
 \Ccal(\F_{q^n}/\F_q)\le n-d.
\]

Choose a normal element \(\alpha\in\F_{q^n}\). Then
\[
 \alpha,\alpha^q,\ldots,\alpha^{q^{n-1}}
\]
is an \(\F_q\)-basis of \(\F_{q^n}\). Write
\(\alpha_i=\alpha^{q^i}\), with indices taken modulo \(n\), and set
\[
 A=\operatorname{span}_{\F_q}\{\alpha_0,\ldots,\alpha_{n-d-1}\}.
\]
Thus \(\dim_{\F_q}A=n-d\).

Suppose that \(0\ne x\in A\) belongs to a proper intermediate field
\(\F_{q^e}\), where \(e\mid n\) and \(e<n\). Since \(d\) is the largest
proper divisor of \(n\), we have \(e\le d\). Write
\[
 x=\sum_{i=0}^{n-1}c_i\alpha_i,
 \qquad c_i\in\F_q.
\]
By the definition of \(A\),
\[
 c_{n-d}=c_{n-d+1}=\cdots=c_{n-1}=0.
\]
On the other hand, \(x\in\F_{q^e}\) implies \(x^{q^e}=x\). Since
\(\alpha_i^{q^e}=\alpha_{i+e}\), uniqueness of normal-basis coordinates gives
\[
 c_i=c_{i-e}\qquad(i\bmod n).
\]
Hence the coordinate sequence is periodic with period \(e\). The displayed
block contains at least \(e\) consecutive zero coordinates, one from every
residue class modulo \(e\). Periodicity therefore forces all \(c_i\) to be
zero, contradicting \(x\ne0\).

Thus \(A\) meets every proper intermediate field trivially. Every nonzero
\(x\in A\) therefore satisfies \(\F_q(x)=\F_{q^n}\), and
\[
 [\F_q(x):\F_q]=n>n-d=\dim_{\F_q}A.
\]
So \(A\) is Chowla and \(\Ccal(\F_{q^n}/\F_q)\ge n-d\).
\end{proof}

\begin{corollary}[Exact formula for finite separable extensions]\label{cor:separable-exact}
Let \(K\subseteq L\) be a finite separable extension of degree \(n>1\). Then
\[
 \boxed{\Ccal(L/K)=n-d_{\max}(L/K).}
\]
\end{corollary}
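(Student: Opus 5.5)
The plan is to split according to whether the base field \(K\) is infinite or finite, and to reduce each case to a result already proved.

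If \(K\) is infinite, the hypotheses are exactly those of Theorem~\ref{thm:infinite-sharp}, so the formula follows immediately.

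If \(K\) is finite, say \(K=\F_q\), then \(L\) is a finite field of degree \(n\) over \(K\). Hence \(L\cong\F_{q^n}\) as extensions of \(\F_q\). The invariant \(\Ccal(L/K)\) depends only on the extension up to \(K\)-isomorphism, because a \(K\)-isomorphism preserves dimensions of subspaces and degrees \([K(a):K]\). So we may take \(L=\F_{q^n}\). The separability hypothesis is automatic here.

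It then remains to identify \(d_{\max}(\F_{q^n}/\F_q)\). The intermediate fields are exactly the \(\F_{q^e}\) with \(e\mid n\). The proper ones correspond to proper divisors \(e<n\), and the largest proper divisor of \(n\) is \(n/p\), where \(p\) is the least prime divisor of \(n\). Therefore \(d_{\max}=n/p\). Theorem~\ref{thm:finite-field-exact} then gives
\[
\Ccal(L/K)=n-\frac np=n-d_{\max}(L/K).
\]

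I do not expect a real obstacle. The only point needing a remark is the invariance of \(\Ccal\) and \(d_{\max}\) under \(K\)-isomorphism. This is immediate, since a \(K\)-isomorphism carries intermediate fields to intermediate fields of the same degree and Chowla subspaces to Chowla subspaces of the same dimension.
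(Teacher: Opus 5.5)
Your proposal is correct and matches the paper's proof: the infinite case is Theorem~\ref{thm:infinite-sharp}, and the finite case reduces to \(L=\F_{q^n}\) and Theorem~\ref{thm:finite-field-exact}. Your identification \(d_{\max}(\F_{q^n}/\F_q)=n/p\) and your remark on invariance under \(K\)-isomorphism spell out steps the paper leaves implicit; the former already appears inside the proof of Theorem~\ref{thm:finite-field-exact}.
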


\begin{proof}
If \(K\) is infinite, this is Theorem~\ref{thm:infinite-sharp}. If \(K\) is
finite, then \(K=\F_q\) and \(L=\F_{q^n}\) for some prime power \(q\), and
the result is Theorem~\ref{thm:finite-field-exact}.
\end{proof}

\subsection{Counting in the chain case}

We write
\[
 \gbinom{a}{b}{q}
\]
for the Gaussian binomial coefficient, the number of \(b\)-dimensional
subspaces of an \(a\)-dimensional vector space over \(\F_q\). It is understood
to be zero when \(b<0\) or \(b>a\).

\begin{lemma}\label{lem:disjoint-count}
Let \(V\) be an \(n\)-dimensional vector space over \(\F_q\), and let
\(W\subseteq V\) be a fixed \(d\)-dimensional subspace.  Then the number of
\(m\)-dimensional subspaces \(A\subseteq V\) satisfying
\[
 A\cap W=\{0\}
\]
is
\[
 q^{dm}\gbinom{n-d}{m}{q}.
\]
\end{lemma}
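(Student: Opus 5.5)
The plan is to count ordered bases and then divide by the number of ordered bases of a fixed \(m\)-dimensional space. We may assume \(0\le m\le n-d\); otherwise no such \(A\) exists, since \(\dim(A+W)=m+d\le n\) is forced, and the Gaussian binomial vanishes by convention. The case \(m=0\) is trivial, since both sides equal \(1\). For \(m\ge1\), we count ordered \(m\)-tuples \((v_1,\ldots,v_m)\) of vectors in \(V\) such that, for each \(j\), the vector \(v_j\) lies outside \(W+\operatorname{span}(v_1,\ldots,v_{j-1})\).

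The first step is to check that this condition characterizes exactly the ordered bases of subspaces \(A\) with \(\dim A=m\) and \(A\cap W=\{0\}\). The argument is the same as in the proof of Theorem~\ref{thm:infinite-sharp}. Inductively, \(A_j=\operatorname{span}(v_1,\ldots,v_j)\) satisfies \(A_j\cap W=\{0\}\), so \(\dim(W+A_{j})=d+j\). Conversely, any basis of such an \(A\) satisfies the condition. Indeed, if \(v_j\in W+A_{j-1}\), write \(v_j=w+a\) with \(w\in W\) and \(a\in A_{j-1}\). Then \(w=v_j-a\) is a nonzero element of \(A\cap W\); it is nonzero because the \(v_i\) are independent.

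Next we count. At step \(j\) the number of choices is \(q^n-q^{d+j-1}\), so the number of admissible tuples is
\[
\prod_{j=0}^{m-1}\bigl(q^n-q^{d+j}\bigr)=q^{dm}\prod_{j=0}^{m-1}\bigl(q^{n-d}-q^{j}\bigr).
\]
Each admissible \(A\) has exactly \(\prod_{j=0}^{m-1}(q^m-q^j)\) ordered bases, all of which are admissible by the first step. Dividing gives
\[
q^{dm}\,\frac{\prod_{j=0}^{m-1}(q^{n-d}-q^j)}{\prod_{j=0}^{m-1}(q^m-q^j)},
\]
and the quotient is the standard formula for \(\gbinom{n-d}{m}{q}\).

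The only point needing care is the equivalence between the sequential avoidance condition and \(A\cap W=\{0\}\). That argument is already essentially present in Theorem~\ref{thm:infinite-sharp}, so I expect no real obstacle. An alternative route is to identify each such \(A\) with the graph of a linear map \(V/W\supseteq\bar A\to W\) after fixing a complement of \(W\). This gives \(\gbinom{n-d}{m}{q}\) choices of \(\bar A\) times \(q^{dm}\) choices of the linear map, and could serve as a cross-check.
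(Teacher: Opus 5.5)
Your proof is correct, but your main argument takes a different route from the paper. The paper's proof is exactly the cross-check you sketch at the end. It observes that \(\pi|_A\) is injective for the quotient map \(\pi\colon V\to V/W\) whenever \(A\cap W=\{0\}\). It then identifies the admissible \(A\) with \(\pi(A)=B\), for a fixed \(m\)-dimensional \(B\subseteq V/W\), with graphs of linear maps \(B\to W\), after splitting \(\pi^{-1}(B)\cong B\oplus W\). This gives \(q^{dm}\) lifts over each of the \(\gbinom{n-d}{m}{q}\) choices of \(B\).

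The bijection has three advantages. It explains the two factors separately: the admissible subspaces lying over each \(B\) correspond to \(\operatorname{Hom}(B,W)\). It uses the Gaussian binomial only through the paper's definition as a number of subspaces. And it disposes of the case \(m>n-d\) automatically, since \(V/W\) then has no \(m\)-dimensional subspaces. Its only non-canonical ingredient is the choice of splitting.

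Your ordered-basis count needs no splitting and reuses the sequential-avoidance step from Theorem~\ref{thm:infinite-sharp}. The cost is a division and an appeal to the product formula for \(\gbinom{n-d}{m}{q}\). Because the paper defines the Gaussian binomial as a subspace count, note that running your computation with \(W=\{0\}\) inside an \((n-d)\)-dimensional space proves that product formula. So your argument is self-contained.
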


\begin{proof}
Let \(\pi:V\to V/W\) be the quotient map.  If \(A\cap W=\{0\}\), then
\(\pi|_A\) is injective, and hence \(\pi(A)\) is an \(m\)-dimensional subspace
of \(V/W\).

Conversely, fix an \(m\)-dimensional subspace \(B\subseteq V/W\).  The
subspaces \(A\subseteq V\) satisfying \(A\cap W=\{0\}\) and \(\pi(A)=B\) are
precisely the graphs of \(\F_q\)-linear maps \(B\to W\), after identifying
\(\pi^{-1}(B)\) with \(B\oplus W\).  Since
\[
 \dim_{\F_q}B=m
 \qquad\text{and}\qquad
 \dim_{\F_q}W=d,
\]
there are \(q^{dm}\) such linear maps.  There are
\[
 \gbinom{n-d}{m}{q}
\]
choices for \(B\subseteq V/W\).  Multiplying these two numbers gives the
formula.
\end{proof}

\begin{proposition}\label{prop:unique-maximal-forbidden}
Let \(1\le m<n\).  Suppose that among the divisors \(d\mid n\) satisfying
\(d\le m\), there is a unique maximal element \(d_0\) under divisibility.
Then the number of \(m\)-dimensional Chowla \(\F_q\)-subspaces of
\(\F_{q^n}/\F_q\) is
\[
 q^{d_0m}\gbinom{n-d_0}{m}{q}.
\]
\end{proposition}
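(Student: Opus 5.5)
By Proposition~\ref{prop:degree-threshold}, an \(m\)-dimensional subspace \(A\subseteq\F_{q^n}\) is Chowla if and only if \(A\cap\F_{q^e}=\{0\}\) for every divisor \(e\mid n\) with \(e\le m\). The plan is to reduce this family of avoidance conditions to the single condition \(A\cap\F_{q^{d_0}}=\{0\}\). Once that is done, Lemma~\ref{lem:disjoint-count} with \(V=\F_{q^n}\), \(W=\F_{q^{d_0}}\) and \(d=d_0\) gives the count \(q^{d_0m}\gbinom{n-d_0}{m}{q}\) immediately.

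The reduction has two directions.

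\emph{Necessity.} Since \(d_0\mid n\) and \(d_0\le m\), the field \(\F_{q^{d_0}}\) is one of the fields that a Chowla subspace of dimension \(m\) must meet trivially.

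\emph{Sufficiency.} Let \(e\mid n\) with \(e\le m\). The set \(\{d\mid n:d\le m\}\) is finite and nonempty, since it contains \(1\) and \(m\ge1\). Every element of a finite poset lies below some maximal element. By hypothesis \(d_0\) is the unique maximal element under divisibility, so \(e\mid d_0\). Hence \(\F_{q^e}\subseteq\F_{q^{d_0}}\), using the stated description of intermediate fields and their intersections. It follows that \(A\cap\F_{q^e}\subseteq A\cap\F_{q^{d_0}}=\{0\}\).

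So the \(m\)-dimensional Chowla subspaces are exactly the \(m\)-dimensional subspaces meeting \(\F_{q^{d_0}}\) trivially, and Lemma~\ref{lem:disjoint-count} counts them.

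The only point needing care is the "every element lies below a maximal one" step. It is what converts uniqueness of the maximal element into every relevant divisor dividing \(d_0\), and this holds because the poset is finite.

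There are no degenerate cases to treat separately. If \(m>n-d_0\), the Gaussian binomial vanishes by convention, and indeed no subspace of dimension \(m\) can meet a \(d_0\)-dimensional subspace trivially. So the formula remains correct there as well, consistent with Theorem~\ref{thm:linear-upper-bound}.
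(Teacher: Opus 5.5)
Your proposal is correct and matches the paper's proof: you reduce the Chowla condition to avoiding the subfields $\F_{q^e}$ with $e\mid n$ and $e\le m$, use uniqueness of the maximal divisor $d_0$ to place all of them inside $\F_{q^{d_0}}$, and then apply Lemma~\ref{lem:disjoint-count}. Your finite-poset argument that every such $e$ divides $d_0$, and your check of the vanishing case $m>n-d_0$, spell out details the paper leaves implicit.
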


\begin{proof}
An \(m\)-dimensional subspace \(A\subseteq\F_{q^n}\) is Chowla if and only if
it contains no nonzero element lying in a proper subfield \(\F_{q^d}\) with
\(d\mid n\) and \(d\le m\).  Since \(d_0\) is the unique maximal such divisor,
all these forbidden subfields are contained in \(\F_{q^{d_0}}\).  Therefore
\(A\) is Chowla if and only if
\[
 A\cap\F_{q^{d_0}}=\{0\}.
\]
The field \(\F_{q^{d_0}}\) has dimension \(d_0\) over \(\F_q\), so the result
follows from Lemma~\ref{lem:disjoint-count}.
\end{proof}

\begin{theorem}\label{thm:prime-power-count}
Let \(n=p^r\), where \(p\) is prime and \(r\ge 1\).  Let \(1\le m<n\), and
choose \(j\) such that
\[
 p^j\le m<p^{j+1},
 \qquad 0\le j\le r-1.
\]
Then the number of \(m\)-dimensional Chowla \(\F_q\)-subspaces of
\(\F_{q^n}/\F_q\) is
\[
 q^{p^jm}\gbinom{p^r-p^j}{m}{q}.
\]
\end{theorem}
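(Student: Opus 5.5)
This is a direct application of Proposition~\ref{prop:unique-maximal-forbidden}, so the plan is to check its hypothesis and read off \(d_0\).

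Since \(n=p^r\), the divisors of \(n\) are exactly \(1,p,p^2,\ldots,p^r\), and they form a chain under divisibility. Given \(1\le m<n\), the integer \(j\) with \(p^j\le m<p^{j+1}\) exists and is unique, and it satisfies \(0\le j\le r-1\): we have \(m\ge 1=p^0\), and \(m<p^r\) forces \(j\le r-1\).

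The divisors \(d\mid n\) with \(d\le m\) are then exactly \(1,p,\ldots,p^j\). Because this set is a chain under divisibility, it has a unique maximal element, namely \(d_0=p^j\). This is where the chain structure of the intermediate fields is used: every forbidden subfield \(\F_{q^{p^i}}\) with \(i\le j\) lies inside \(\F_{q^{p^j}}\).

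Proposition~\ref{prop:unique-maximal-forbidden} now applies with \(d_0=p^j\). It gives the count
\[
 q^{p^jm}\gbinom{p^r-p^j}{m}{q}
\]
of \(m\)-dimensional Chowla subspaces, which is the claimed formula.

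There is no real obstacle here. The only points to state carefully are the range of \(j\) and the convention that the Gaussian binomial vanishes when \(m>p^r-p^j\). In particular, for \(j=r-1\) and \(m>p^r-p^{r-1}\) the formula returns \(0\). This matches Theorem~\ref{thm:finite-field-exact}, which says no Chowla subspace of dimension larger than \(n-n/p\) exists.
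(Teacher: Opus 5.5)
Your proposal is correct and matches the paper's proof: the divisors of \(p^r\) not exceeding \(m\) are the chain \(1,p,\ldots,p^j\), so Proposition~\ref{prop:unique-maximal-forbidden} applies with \(d_0=p^j\). Your extra consistency check is also accurate, namely that the Gaussian binomial vanishes exactly when \(j=r-1\) and \(m>p^r-p^{r-1}\), in agreement with Theorem~\ref{thm:finite-field-exact}.
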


\begin{proof}
The divisors of \(p^r\) not exceeding \(m\) are precisely
\[
 1,p,\ldots,p^j,
\]
and they form a chain under divisibility.  Hence the unique maximal forbidden
subfield is \(\F_{q^{p^j}}\).  Applying
Proposition~\ref{prop:unique-maximal-forbidden} with \(d_0=p^j\) gives
\[
 q^{p^jm}\gbinom{p^r-p^j}{m}{q}.
\]
\end{proof}

\begin{corollary}\label{cor:finite-field-prime-power}
If \(n=p^r\), where \(p\) is prime, then
\[
 \Ccal(\F_{q^n}/\F_q)=n-\frac np.
\]
\end{corollary}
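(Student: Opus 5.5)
This is a direct specialization of Theorem~\ref{thm:finite-field-exact}: for \(n=p^r\) the least prime divisor of \(n\) is \(p\), so \(\Ccal(\F_{q^n}/\F_q)=n-n/p\) at once. The plan below instead reads the corollary off the counting formula of Theorem~\ref{thm:prime-power-count}, which gives an independent check. Throughout, \(n=p^r\) with \(r\ge1\), and the goal is to show that the largest \(m\) with a nonzero count is \(m=p^r-p^{r-1}\).

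\textbf{Upper bound.} The bound \(\Ccal\le n-n/p\) already follows from Theorem~\ref{thm:linear-upper-bound}, since \(d_{\max}=p^{r-1}\). It can also be seen from the counting formula, as follows.
\begin{itemize}
\item Suppose \(m>p^r-p^{r-1}\). Then in particular \(m\ge p^{r-1}\), so the index in Theorem~\ref{thm:prime-power-count} is \(j=r-1\).
\item The count is then \(q^{p^{r-1}m}\gbinom{p^r-p^{r-1}}{m}{q}\).
\item This vanishes because \(m>p^r-p^{r-1}\), by the stated convention that the Gaussian binomial is zero when \(b>a\).
\end{itemize}

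\textbf{Lower bound.} Take \(m=p^r-p^{r-1}\) and first check that \(m<n\) and that \(j=r-1\) is the right index.
\begin{itemize}
\item We need \(p^{r-1}\le m<p^r\). This holds because \(p^r-p^{r-1}=p^{r-1}(p-1)\ge p^{r-1}\).
\item The count is then \(q^{p^{r-1}m}\gbinom{m}{m}{q}=q^{p^{r-1}m}\). This is at least \(1\), so a Chowla subspace of dimension \(m\) exists.
\end{itemize}

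The only point requiring care is \(p=2\), where \(m=p^{r-1}\) exactly. The required inequality \(p^j\le m\) with \(j=r-1\) is then an equality, which is allowed. The degenerate case \(r=1\) with \(p=2\) gives \(n=2\) and \(m=1\). Here \(j=0\) is valid, and the count is \(q^{1}\gbinom{1}{1}{q}=q\), which is nonzero.
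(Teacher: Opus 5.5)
Your proposal is correct. Your opening sentence is exactly the paper's proof: there the corollary is simply the case \(n=p^r\) of Theorem~\ref{thm:finite-field-exact}, whose lower bound comes from the normal-basis construction.

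The counting derivation you then give is a genuinely different route. It uses only Theorem~\ref{thm:prime-power-count}, which rests on Lemma~\ref{lem:disjoint-count} and Proposition~\ref{prop:unique-maximal-forbidden} and not on Theorem~\ref{thm:finite-field-exact}, so it really is an independent check. Unwound, it says that in the chain case every proper subfield lies in \(\F_{q^{n/p}}\). Hence any linear complement of \(\F_{q^{n/p}}\) in \(\F_{q^n}\) is a Chowla subspace of dimension \(n-n/p\), and the lower bound needs only basis extension, not the normal basis theorem. It also gives more than the corollary: the extremal Chowla subspaces are exactly these complements, and there are \(q^{(n/p)(n-n/p)}\) of them.

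What the paper's route buys is generality. When \(n\) is not a prime power there are several incomparable maximal subfields, so a complement of one may meet another. The finite-union avoidance of Lemma~\ref{lem:finite-union-subspaces} is also unavailable over a finite field, which is why the normal-basis argument is used there.

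One small point: Theorem~\ref{thm:prime-power-count} is stated only for \(1\le m<n\), so the counting version of the upper bound, taken alone, should also dismiss \(m=n\). This is trivial, since \(A=\F_{q^n}\) has \([\F_q(a):\F_q]\le n=\dim_{\F_q}A\). In any case you already cover it by citing Theorem~\ref{thm:linear-upper-bound}.
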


\begin{proof}
This is the special case \(n=p^r\) of Theorem~\ref{thm:finite-field-exact}.
\end{proof}

\subsection{The first non-chain case}

Let \(n=pq\), where \(p<q\) are primes. The proper intermediate fields
\(\F_{Q^p}\) and \(\F_{Q^q}\) are incomparable, so a single forbidden-subspace
count no longer suffices.

\begin{corollary}\label{cor:pq-finite-field-extremal}
Let \(p<q\) be primes and let \(Q\) be a prime power. Then
\[
 \Ccal(\F_{Q^{pq}}/\F_Q)=pq-q=q(p-1).
\]
\end{corollary}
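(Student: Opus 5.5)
This is a direct specialization of Theorem~\ref{thm:finite-field-exact}, so the plan is to apply that theorem with \(n=pq\) and \(q\) replaced by the prime power \(Q\). The only work is to identify the least prime divisor of \(pq\) and evaluate the formula. Since \(p<q\) are primes, the least prime divisor of \(n=pq\) is \(p\). Theorem~\ref{thm:finite-field-exact} then gives
\[
 \Ccal(\F_{Q^{pq}}/\F_Q)=pq-\frac{pq}{p}=pq-q=q(p-1).
\]

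As a cross-check, I would confirm that \(d_{\max}\) comes out right. The proper divisors of \(pq\) are \(1\), \(p\) and \(q\), so the proper intermediate fields are \(\F_Q\), \(\F_{Q^p}\) and \(\F_{Q^q}\). The largest of these has degree \(q\), since \(p<q\). Hence
\[
 d_{\max}(\F_{Q^{pq}}/\F_Q)=q,
\]
which is consistent with the general formula \(\Ccal(L/K)=[L:K]-d_{\max}(L/K)\) of Corollary~\ref{cor:separable-exact}.

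It is also worth noting explicitly what the normal-basis construction produces in this case. One takes the span of \(\alpha_0,\ldots,\alpha_{pq-q-1}\) for a normal element \(\alpha\). The excluded block of coordinates then consists of \(q\) consecutive indices, and this covers all residue classes modulo both \(p\) and \(q\). So periodicity with period \(p\) or with period \(q\) forces every coordinate to vanish, and the span meets both \(\F_{Q^p}\) and \(\F_{Q^q}\) trivially.

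There is no real obstacle. The only point needing care is the notational clash: in this subsection \(q\) denotes a prime, while the base field is \(\F_Q\). Theorem~\ref{thm:finite-field-exact} must therefore be invoked with its prime power renamed to \(Q\) and with \(n=pq\).
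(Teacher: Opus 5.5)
Your proposal is correct and is exactly the paper's proof: apply Theorem~\ref{thm:finite-field-exact} with \(n=pq\) over \(\F_Q\). Since the least prime divisor of \(pq\) is \(p\), this gives \(pq-q=q(p-1)\); your \(d_{\max}=q\) cross-check and your remark on the normal-basis block of \(q\) excluded coordinates are accurate but not needed.
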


\begin{proof}
The least prime divisor of \(pq\) is \(p\). Apply
Theorem~\ref{thm:finite-field-exact}.
\end{proof}

\begin{theorem}\label{thm:pq-count}
Let \(p<q\) be primes, let \(Q\) be a prime power, and let
\(L=\F_{Q^{pq}}\).  For \(q\le m<pq\), the number of \(m\)-dimensional
Chowla \(\F_Q\)-subspaces of \(L\) is
\[
 \sum_{a=0}^{q}\sum_{b=0}^{p}(-1)^{a+b}
 Q^{\binom{a}{2}+\binom{b}{2}}
 \left(
 N_{a,b}^{(0)}\gbinom{pq-a-b}{m-a-b}{Q}
 +N_{a,b}^{(1)}\gbinom{pq-a-b+1}{m-a-b+1}{Q}
 \right),
\]
where
\[
 N_{a,b}^{(1)}
 =
 \gbinom{q-1}{a-1}{Q}
 \gbinom{p-1}{b-1}{Q}
\]
and
\[
 N_{a,b}^{(0)}
 =
 \gbinom{q}{a}{Q}
 \gbinom{p}{b}{Q}
 -
 \gbinom{q-1}{a-1}{Q}
 \gbinom{p-1}{b-1}{Q}.
\]
As usual, Gaussian binomial coefficients with negative lower index are
interpreted as zero.
\end{theorem}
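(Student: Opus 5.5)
For \(q\le m<pq\), the divisors \(d\mid pq\) with \(d\le m\) are \(1,p,q\). Proposition~\ref{prop:degree-threshold} therefore says that an \(m\)-dimensional subspace \(A\) is Chowla if and only if
\[
 A\cap U=\{0\}\quad\text{and}\quad A\cap W=\{0\},
\]
where \(U=\F_{Q^q}\) (dimension \(q\)) and \(W=\F_{Q^p}\) (dimension \(p\)). These two subspaces satisfy \(U\cap W=\F_Q\), a line. The count will come from Möbius inversion on the subspace lattice. The classical identity says that the number of \(m\)-subspaces of an \(N\)-dimensional space avoiding a fixed \(k\)-dimensional subspace \(U\) is
\[
 \sum_{a}(-1)^aQ^{\binom a2}\,\#\{S\le U:\dim S=a\}\,\#\{A\supseteq S:\dim A=m\}.
\]
This holds because \(\sum_{S\le T}\mu(0,S)=[T=0]\) with \(\mu(0,S)=(-1)^aQ^{\binom a2}\) when \(\dim S=a\).

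\textbf{Step 1: double inversion.} I apply the indicator identity twice, once for \(A\cap U\) and once for \(A\cap W\):
\[
 [A\cap U=0][A\cap W=0]
 =\sum_{S\le A\cap U}\sum_{T\le A\cap W}\mu(0,S)\mu(0,T).
\]
Summing over all \(m\)-subspaces \(A\) and interchanging the order of summation, the count becomes
\[
 \sum_{a,b}(-1)^{a+b}Q^{\binom a2+\binom b2}\sum_{\substack{S\le U,\ \dim S=a\\ T\le W,\ \dim T=b}}\#\{A:\dim A=m,\ A\supseteq S+T\}.
\]
The inner count is \(\gbinom{pq-\dim(S+T)}{m-\dim(S+T)}{Q}\). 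This is where the structure of the pair \((S,T)\) enters.

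\textbf{Step 2: classify pairs \((S,T)\).} Because \(S\cap T\subseteq U\cap W=\F_Q\), we have \(\dim(S\cap T)\in\{0,1\}\), so \(\dim(S+T)\) equals \(a+b\) or \(a+b-1\). The case \(S\cap T=\F_Q\) means \(\F_Q\) lies in both \(S\) and \(T\). The number of \(a\)-subspaces of \(U\) containing the fixed line \(\F_Q\) is \(\gbinom{q-1}{a-1}{Q}\), and similarly \(\gbinom{p-1}{b-1}{Q}\) for \(T\le W\). So the pairs with \(\dim(S+T)=a+b-1\) number \(N^{(1)}_{a,b}\). All other pairs have \(S\cap T\subseteq \F_Q\) with \(\F_Q\not\subseteq S\cap T\), hence \(S\cap T=0\), and they number \(N^{(0)}_{a,b}\).

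Substituting these counts gives exactly the stated formula. The ranges \(0\le a\le q\) and \(0\le b\le p\) come from \(\dim U=q\) and \(\dim W=p\). Gaussian binomials vanish automatically when \(m-a-b<0\), or \(m-a-b+1<0\), and when \(a=0\) or \(b=0\) the term \(N^{(1)}\) vanishes by the convention that negative lower index gives zero.

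\textbf{Main obstacle.} The delicate point is justifying the Möbius identity \(\mu(0,S)=(-1)^aQ^{\binom a2}\) in the subspace lattice and the interchange of sums; I will cite Stanley for this. I also need to check that combining two independent inversions is legitimate even though \(U\) and \(W\) overlap. It is, because the indicator product factors before summing, and the overlap only affects \(\dim(S+T)\), which Step 2 handles.
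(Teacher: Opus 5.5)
Your proposal is correct and follows the paper's proof essentially step for step. You reduce via the degree-threshold characterization to avoiding \(\F_{Q^q}\) and \(\F_{Q^p}\), multiply the two subspace-lattice M\"obius indicators and interchange the sums. You then split the pairs \((S,T)\) according to whether \(S\cap T\) is \(0\) or the common line \(\F_Q\), which yields \(N^{(0)}_{a,b}\) and \(N^{(1)}_{a,b}\); the differences from the paper are purely notational (the paper names the fields \(E,F\) and the subspaces \(U,W\)).
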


\begin{proof}
Put
\[
 K=\F_Q,\qquad E=\F_{Q^q},\qquad F=\F_{Q^p}.
\]
The proper intermediate fields of \(L/K\) are exactly
\[
 K,\qquad E,\qquad F.
\]
Since \(q\le m<pq\), an \(m\)-dimensional subspace \(A\subseteq L\) is Chowla
if and only if
\[
 A\cap E=A\cap F=\{0\}.
\]
Indeed, \(K\subseteq E\cap F\), so avoiding \(E\) and \(F\) also avoids \(K\).

Here \(\mu_E\) and \(\mu_F\) denote the Möbius functions of the subspace
lattices of \(E\) and \(F\), respectively.  For a subspace \(A\subseteq L\),
the indicator of the condition \(A\cap E=\{0\}\) is
\[
 \sum_{U\subseteq A\cap E}\mu_E(0,U),
\]
and similarly the indicator of \(A\cap F=\{0\}\) is
\[
 \sum_{W\subseteq A\cap F}\mu_F(0,W).
\]
Therefore the desired number is
\[
 \sum_{U\subseteq E}\sum_{W\subseteq F}
 \mu_E(0,U)\mu_F(0,W)
 \#\{A\le L:\dim_K A=m,\ U+W\subseteq A\}.
\]

Let
\[
 \dim_K U=a,\qquad \dim_K W=b,\qquad \dim_K(U\cap W)=\varepsilon.
\]
Then
\[
 \dim_K(U+W)=a+b-\varepsilon.
\]
Thus the number of \(m\)-dimensional subspaces \(A\subseteq L\) containing
\(U+W\) is
\[
 \gbinom{pq-a-b+\varepsilon}{m-a-b+\varepsilon}{Q}.
\]
Also, the Möbius functions of finite subspace lattices satisfy
\[
 \mu_E(0,U)=(-1)^aQ^{\binom{a}{2}},
 \qquad
 \mu_F(0,W)=(-1)^bQ^{\binom{b}{2}}.
\]

It remains to count the pairs \((U,W)\) according to whether
\(\varepsilon=0\) or \(\varepsilon=1\).  Since
\[
 E\cap F=K
\]
and \(K\) is one-dimensional over itself, the intersection \(U\cap W\) is
either zero or the common line \(K\).  Hence \(\varepsilon=1\) occurs exactly
when both \(U\) and \(W\) contain \(K\).  Therefore the number of pairs
\((U,W)\) with
\[
 \dim_K U=a,\qquad \dim_K W=b,\qquad \dim_K(U\cap W)=1
\]
is
\[
 N_{a,b}^{(1)}
 =
 \gbinom{q-1}{a-1}{Q}
 \gbinom{p-1}{b-1}{Q}.
\]
The total number of pairs \((U,W)\) with \(\dim_K U=a\) and \(\dim_K W=b\) is
\[
 \gbinom{q}{a}{Q}\gbinom{p}{b}{Q}.
\]
Hence the number of such pairs with \(U\cap W=\{0\}\) is
\[
 N_{a,b}^{(0)}
 =
 \gbinom{q}{a}{Q}
 \gbinom{p}{b}{Q}
 -
 \gbinom{q-1}{a-1}{Q}
 \gbinom{p-1}{b-1}{Q}.
\]

Substituting the two cases \(\varepsilon=0\) and \(\varepsilon=1\) into the
double Möbius-inversion sum gives
\[
 \sum_{a=0}^{q}\sum_{b=0}^{p}(-1)^{a+b}
 Q^{\binom{a}{2}+\binom{b}{2}}
 \left(
 N_{a,b}^{(0)}\gbinom{pq-a-b}{m-a-b}{Q}
 +N_{a,b}^{(1)}\gbinom{pq-a-b+1}{m-a-b+1}{Q}
 \right).
\]
This is the claimed formula.
\end{proof}

\begin{remark}
For \(1\le m<p\), the only forbidden proper subfield is \(\F_Q\).  For
\(p\le m<q\), the unique maximal forbidden subfield is \(\F_{Q^p}\).  Hence
Proposition~\ref{prop:unique-maximal-forbidden} gives the corresponding
counting formulas.

The range \(q\le m<pq\) is different: one must avoid two incomparable proper
subfields, \(\F_{Q^q}\) and \(\F_{Q^p}\), whose intersection is \(\F_Q\).
Thus the one-forbidden-subspace quotient-and-graph count from
Lemma~\ref{lem:disjoint-count} does not directly apply, and the
inclusion--exclusion formula in Theorem~\ref{thm:pq-count} is the natural
replacement.
\end{remark}

\section{Further questions}

The cyclic formula in Theorem~\ref{thm:cyclic-exact} reduces the computation of \(\Ccal(\Z/n\Z)\) to the divisor structure of \(n\).  Proposition~\ref{prop:cyclic-phi-criterion} gives an exact numerical criterion for the equality \(\Ccal(\Z/n\Z)=\varphi(n)\), but a more conceptual classification of the integers satisfying this equality remains desirable.

Theorem~\ref{thm:large-ratio-positive-density} and Corollary~\ref{cor:ratio-asymptotics} show that the ratio \(\Ccal(\Z/n\Z)/\varphi(n)\) has liminf \(1\) and infinite limsup, and that every fixed large-value threshold is attained on a set of positive lower density.  It would be interesting to estimate these densities effectively as the threshold grows.

Proposition~\ref{prop:order-normalization} shows that
\[
 \liminf_{n\to\infty}\frac{\Ccal(\Z/n\Z)}n=0,
 \qquad
 \limsup_{n\to\infty}\frac{\Ccal(\Z/n\Z)}n=1.
\]
Thus the normalization by the group order has a very different behavior from the normalization by \(\varphi(n)\).  One may ask for more precise distributional information about this bounded ratio, for example along integers with prescribed small prime divisors.

Finally, Corollary~\ref{cor:separable-exact} determines the extremal dimension for every finite separable extension, but the enumeration problem remains open in general. Over finite fields it becomes increasingly combinatorial when the lattice of intermediate fields is not a chain. The case \(n=pq\) is handled by two-subfield inclusion--exclusion. For degrees with three or more incomparable maximal proper divisors, one expects higher-order formulas governed by the intersection pattern of the corresponding subfields.

\section{Methodology}\label{sec:methodology}

This paper was produced through a human--AI collaborative workflow using a lightweight, reasoning-oriented configuration of Co-Scientist~\cite{Gottweis2025}, with additional assistance from Gemini Deep Think~\cite{DeepMind2026} and Gemini Pro~\cite{DeepMind2026Pro}. The human authors formulated the problem, directed the exploratory prompts, checked the resulting arguments, repaired gaps, supplied missing hypotheses, and wrote the final exposition. The AI systems assisted by suggesting candidate reductions, examples, proof strategies, and draft arguments. All statements and proofs in the final paper were reviewed and validated by the human authors, who take responsibility for their correctness. We regard the paper as an example of AI-assisted mathematical discovery, not as an autonomous AI proof. Further details are given in Appendix~\ref{app:ai-assisted-discovery}.

\newpage
\appendix
\section{Numerical plots}

This appendix records numerical values of the two normalized quantities discussed in the paper.  The data were computed for cyclic groups \(\mathbb Z/n\mathbb Z\) for \(2\le n\le 3000\).  The first plot is intended only as a small-range illustration: Theorem~\ref{thm:large-ratio-positive-density} and Corollary~\ref{cor:ratio-asymptotics} show that \(\Ccal(\Z/n\Z)/\varphi(n)\) is unbounded, although this behavior is not visible in such a short numerical range.\\\

\begin{figure}[htbp]
\centering
\begin{tikzpicture}
\begin{axis}[
    width=0.92\textwidth,
    height=0.46\textwidth,
    xlabel={$n$},
    ylabel={$\mathcal C(\mathbb Z/n\mathbb Z)/\varphi(n)$},
    grid=both,
    mark size=0.45pt,
    xmin=2,
    xmax=3000,
    ymin=0.95,
    ymax=2.05,
]
\addplot+[only marks] table[x index=0,y index=1] {TableChowlaOverPhi.dat};
\end{axis}
\end{tikzpicture}
\caption{Numerical values of $\mathcal C(\mathbb Z/n\mathbb Z)/\varphi(n)$ for $2\le n\le 3000$.}
\label{fig:chowla-over-phi}
\end{figure}

\begin{figure}[htbp]
\centering
\begin{tikzpicture}
\begin{axis}[
    width=0.92\textwidth,
    height=0.46\textwidth,
    xlabel={$n$},
    ylabel={$\mathcal C(\mathbb Z/n\mathbb Z)/n$},
    grid=both,
    mark size=0.45pt,
    xmin=2,
    xmax=3000,
    ymin=0,
    ymax=1.05,
]
\addplot+[only marks] table[x index=0,y index=1] {TableChowlaOverN.dat};
\end{axis}
\end{tikzpicture}
\caption{Numerical values of $\mathcal C(\mathbb Z/n\mathbb Z)/n$ for $2\le n\le 3000$.}
\label{fig:chowla-over-n}
\end{figure}

\medskip

\clearpage
\section{AI-assisted discovery using a lightweight reasoning configuration of Co-Scientist}
\label{app:ai-assisted-discovery}

This appendix documents the AI-assisted component of the discovery process. It is intended as a rigorous methodological record of the human--AI collaboration rather than a benchmark or model-performance claim.

\subsection{Overview}

In this appendix we describe the role of a lightweight, reasoning-oriented configuration of Co-Scientist~\cite{Gottweis2025}, backed by Gemini Deep Think~\cite{DeepMind2026} and Gemini Pro~\cite{DeepMind2026Pro}, in discovering the mathematical results presented in the main paper. While broader, multi-agent frameworks have been developed to support fully autonomous mathematical research and stateful collaborative workbenches, such as Aletheia~\cite{Feng2026} and AI Co-Mathematician~\cite{Zheng2026}, the present case study focuses exclusively on deploying a compact reasoning loop to assist an expert in step-by-step mathematical deduction.

As defined in the main text, a Chowla set is a nonempty subset \(S\) of a finite group \(G\) satisfying \(\ord(x)>|S|\) for every \(x\in S\), and the extremal invariant \(\Ccal(G)\) is the maximum size of such a set. This problem---isolating Hamidoune's order condition~\cite{Hamidoune1996} as a foundational subject in its own right---had not been extensively studied in prior literature, making it a suitable testbed for the AI system's deductive capabilities; see Section~\ref{app:problem-selection}.

The results are the product of a tight, expert-guided human--AI collaboration:
\begin{itemize}[leftmargin=2em]
\item The AI system generated substantial proof-grade material, including the order-distribution reduction for \(\Ccal(G)\), fixed-cardinality counting formulas, exact formulas for cyclic and finite abelian groups, and the conceptual framework for the linear analogue of Chowla subspaces.
\item The human mathematician selected the problem, decomposed it into tractable subproblems, evaluated outputs for correctness, supplied missing hypotheses, completed analytical arguments, and authored the final manuscript.
\end{itemize}

\subsection{Reasoning-oriented configuration}

The reasoning-oriented configuration used in this work is a streamlined variant of Co-Scientist~\cite{Gottweis2025}. The full system supports long-horizon autonomous exploration---including literature retrieval, knowledge-base construction, focus-area generation, and multi-round hypothesis evolution---capabilities that can be effective for open-ended empirical domains but introduce latency and compounding-error risks in verifiable mathematics, where correctness can and should be checked at each step. This configuration therefore disables auxiliary generation pathways, such as self-play, article exploration, and deep research, and retains only the core loop: generation, review, pairwise ranking, and iterative improvement. The present appendix accordingly reports a case study in which this targeted configuration contributed substantial proof-grade material under continuous expert guidance, not an autonomous discovery by the full Co-Scientist system.

The pipeline executes in two rounds with the following stages.

\medskip
\noindent\textbf{Round 0 (Initial Generation).}
\begin{enumerate}[leftmargin=2.6em,label=\textup{(\arabic*)}]
\item \textbf{Parallel Generation.} \(N\) independent solution attempts are generated in parallel, each via a separate Deep Think invocation that performs multi-step reasoning with tool use, including code execution and search.
\item \textbf{Review.} Each candidate is independently reviewed via prompt-driven critique that identifies strengths, weaknesses, and potential logical gaps.
\item \textbf{Placement Tournament.} Ideas are compared head-to-head in pairwise matches. Each match presents two candidates, together with their reviews, to the model, which selects the stronger one. Match order is randomized and swapped to mitigate position bias. Elo ratings are updated after each match to produce an initial ranking.
\item \textbf{Deep Verification.} All \(N\) ideas undergo multi-aspect verification: factual claims are extracted and checked, underlying assumptions are identified and tested, and internal logical coherence is evaluated. This produces a structured verification summary for each idea.
\item \textbf{Final Tournament.} A second tournament is run using only the deeply verified ideas, incorporating the verification results into the ranking. Low-ranking ideas are pruned according to Elo thresholds.
\end{enumerate}

\noindent\textbf{Round 1 (Improvement).}
\begin{enumerate}[leftmargin=2.6em,label=\textup{(\arabic*)},start=6]
\item \textbf{Iterative Refinement.} The top-ranked ideas are fed back to Deep Think along with their review and verification feedback. Deep Think generates \(N'\le N\) improved variants that attempt to patch identified flaws and extend the reasoning.
\item \textbf{Review, Verification, and Tournament.} The improved ideas undergo the same review, deep verification, and pairwise ranking process, now competing against both the original and improved candidates. Low-ranking ideas are again pruned according to Elo thresholds.
\end{enumerate}

\noindent\textbf{Final Output.}
\begin{enumerate}[leftmargin=2.6em,label=\textup{(\arabic*)},start=8]
\item \textbf{Overview Report.} The surviving top-ranked ideas are assembled into a structured report including complete proof chains and derivations. Reasoning-preserving prompts instruct the summarizer to maintain full logical chains rather than abstracting them into high-level summaries.
\end{enumerate}

In our experiments, the generation and refinement stages, steps 1 and 6, were driven by Gemini Deep Think, specifically Pro with extended thinking, while the review, verification, and tournament stages, steps 2--5 and 7, used Gemini Pro. The overview report, step 8, used Gemini Flash with reasoning-preservation instructions. Crucially, the review, ranking, and pruning components function as heuristic filters to optimize search efficiency and reduce cognitive load; they are not formal symbolic proof checkers. Final responsibility for establishing mathematical soundness remained with the human mathematician.

\begin{figure}[htbp]
\centering
\begin{tikzpicture}[
  x=1cm,y=1cm,
  box/.style={draw,rounded corners=1.5pt,align=center,minimum width=2.25cm,minimum height=0.8cm,font=\scriptsize,inner sep=2pt},
  deep/.style={box,fill=blue!13},
  pro/.style={box,fill=red!13},
  flash/.style={box,fill=yellow!20},
  in-out/.style={box,fill=gray!20},
  arr/.style={-{Latex[length=2mm]},thick}
]
\node[font=\scriptsize\bfseries] at (-1.8,1.65) {Round 0};
\node[in-out] (goal) at (0.5,1.65) {Research\\Goal};
\node[deep] (gen) at (3.4,1.65) {Generate\\ \(N\) Ideas\\ Deep Think};
\node[pro] (rev0) at (6.3,1.65) {Review\\ Pro};
\node[pro] (elo0) at (9.2,1.65) {Elo\\Tournament\\ Pro};
\node[pro] (ver0) at (12.1,1.65) {Deep\\Verification\\ Pro};
\draw[arr] (goal)--(gen); \draw[arr] (gen)--(rev0); \draw[arr] (rev0)--(elo0); \draw[arr] (elo0)--(ver0);

\node[font=\scriptsize\bfseries] at (-1.8,-0.1) {Round 1};
\node[pro] (ver1) at (3.4,-0.1) {Deep\\Verification\\ Pro};
\node[pro] (elo1) at (6.3,-0.1) {Elo\\Tournament\\ Pro};
\node[pro] (rev1) at (9.2,-0.1) {Review\\ Pro};
\node[deep] (imp) at (12.1,-0.1) {Improve\\\(N'\) Ideas\\ Deep Think};
\draw[arr] (ver0)--(imp); \draw[arr] (imp)--(rev1); \draw[arr] (rev1)--(elo1); \draw[arr] (elo1)--(ver1);

\node[font=\scriptsize\bfseries] at (-1.8,-1.85) {Summary};
\node[flash] (over) at (3.4,-1.85) {Overview\\Report\\ Flash};
\node[in-out] (final) at (6.3,-1.85) {Final\\Output};
\draw[arr] (ver1)--(over); \draw[arr] (over)--(final);

\node[deep,minimum width=1.65cm] at (9.2,-1.85) {Deep Think};
\node[pro,minimum width=1.25cm] at (11,-1.85) {Pro};
\node[flash,minimum width=1.25cm] at (12.6,-1.85) {Flash};
\end{tikzpicture}
\caption{Co-Scientist's reasoning-configuration pipeline. Round 0 generates three independent solutions via Deep Think, reviews them, ranks them through a pairwise Elo tournament, and performs multi-aspect deep verification, including claim extraction, factuality checking, assumption analysis, and coherence evaluation. Round 1 improves top-ranked ideas, with \(N'<N\), using review and verification feedback, then repeats the same evaluation cycle. The overview report preserves full reasoning chains through reasoning-preserving prompts.}
\label{fig:ai-pipeline}
\end{figure}

\subsection{Problem-selection rationale and literature status}
\label{app:problem-selection}

The mathematical problem treated in this paper was deliberately selected to stress abstract mathematical reasoning and rule out direct literature retrieval or training-data memorization. A persistent challenge in evaluating frontier AI systems on mathematical tasks is data contamination: many well-known or benchmark problems possess established proof templates in the public domain, allowing a model to synthesize solutions by applying standard proof templates rather than genuine novel deduction. To mitigate this confound, we chose a problem setting that was intentionally absent from the mathematical literature.

The foundational motivation stems from Chowla-type order conditions in additive combinatorics. While the core philosophy originates with S. Chowla, it was specifically Y.~O. Hamidoune~\cite{Hamidoune1996} who introduced hypotheses requiring group-element orders to be strictly larger than the size of the relevant subset in order to eliminate small periodic obstruction loops in product-growth theorems. The present work isolates this constraint, transforming it into an independent, open-ended study of the maximum size and exact enumeration of these subsets.

To verify that the resulting invariants, \(\Ccal(G)\) and \(\Ccal(L/K)\), were unstudied, the first author conducted a comprehensive literature baseline check before the AI interaction. The structural status of the problem is summarized as follows.
\begin{itemize}[leftmargin=2em]
\item \textbf{The linear-analogue origin.} The conceptual translation of this constraint into linear algebra---the definition of a Chowla subspace---was first introduced by the first author in 2025~\cite{Aliabadi2025}. That initial work did not establish exact formulas or investigate the extremal invariant \(\Ccal(L/K)\).
\item \textbf{Community verification through MathOverflow.} To test whether the extremal and enumerative problems for both groups and fields had known solutions, the first author posted a public inquiry on September 23, 2025, entitled ``Size of Chowla sets and Chowla subspaces''~\cite{MOChowla}. The inquiry remained unanswered, providing an additional indication that no standard formula or established approach was readily available for these specific questions.
\end{itemize}

During the interaction, the AI system had web-search capabilities, and the run logs show that it used them. The system retrieved the MathOverflow and LiveJournal mirror posts authored by the human mathematician, together with foundational work by Hamidoune and related literature on group matchability. Because the retrieved posts contained no solutions and the formal literature supplied only the foundational definitions and background, the searches did not return the theorem-level conclusions established here. The system still had to perform mathematical reasoning to propose such ingredients as the order-distribution reduction and the subspace inclusion--exclusion architecture.

We emphasize that this selection criterion does not constitute a formal claim of ``pure reasoning,'' since the system undoubtedly benefited from standard background mathematics acquired during pre-training. The final proofs in the main text are self-contained and rely on classical graduate-level tools, including finite group theory and Galois theory, finite-field subfield lattices, Euler's totient function, Gaussian binomial coefficients, and M\"obius inversion on divisor and subspace lattices. The system applied these standard tools to an extremal invariant for which no existing solution was supplied by the retrieved sources. The appropriate claim is therefore one of collaborative human--AI mathematical discovery, not proof of independence from all learned mathematical patterns.

\subsection{Interaction protocol and expert prompting}

The AI-assisted portion began with two broad prompts concerning the extremal and enumerative behavior of Chowla sets. The first asked for upper and lower bounds, and possible exact formulas, for \(\Ccal(G)\). The second asked for the number of Chowla subsets of fixed cardinality. These broad prompts produced several useful ideas, including the reduction to order-survival functions and cyclic-group formulas.

\begin{tcolorbox}[colback=white,colframe=black!65,title=Prompt 1: Bounds for \(\Ccal(G)\),fonttitle=\bfseries,breakable]
Let \(G\) be a finite group. A nonempty subset \(S\subseteq G\) is called a Chowla subset if every element of \(S\) has order strictly larger than \(|S|\); that is,
\[
 \ord(x)>|S|\qquad\text{for every }x\in S.
\]
Chowla subsets were introduced and studied by Y.~O. Hamidoune.

For a finite group \(G\), let \(\Ccal(G)\) denote the maximum size of a Chowla subset of \(G\). For example, if \(p\) is prime, then \(\Ccal(\Z/p\Z)=p-1\).

\textbf{Question.} What are some nontrivial upper and lower bounds for \(\Ccal(G)\) for a given finite group \(G\)?

\textbf{Remark.} A natural starting point is \(\Ccal(\Z/n\Z)\) for finite cyclic groups. A straightforward lower bound is Euler's totient function:
\[
 \varphi(n)=n\prod_{p\mid n}\left(1-\frac1p\right).
\]
\end{tcolorbox}

\begin{tcolorbox}[colback=white,colframe=black!65,title=Prompt 2: Counting Chowla subsets of fixed cardinality,fonttitle=\bfseries,breakable]
Let \(G\) be a finite group. A nonempty subset \(S\subseteq G\) is called a Chowla subset if every element of \(S\) has order strictly larger than \(|S|\); that is,
\[
 \ord(x)>|S|\qquad\text{for every }x\in S.
\]
Chowla subsets were introduced and studied by Y.~O. Hamidoune.

\textbf{Question.} For each integer \(n\) with \(1<n<|G|\), determine or estimate the number of Chowla subsets \(S\subseteq G\) of cardinality \(n\).

\textbf{Remark.} A natural starting point is \(G=\Z/n\Z\), where the condition \(\ord(x)>|S|\) can be analyzed explicitly in terms of divisors of \(n\).
\end{tcolorbox}

After reviewing the first outputs, the human mathematician decomposed the problem into a sequence of more focused mathematical subquestions. These included cyclic groups, threshold-form constructions, equality with Euler's totient function, finite abelian groups, abelian \(p\)-groups, the linear analogue for field extensions, and finite-field subspace-counting problems. Each refined prompt supplied definitions, notation, and a specific mathematical target. Table~\ref{tab:prompt-provenance} summarizes the interaction structure at a high level.

\begin{longtable}{>{\raggedright\arraybackslash}p{0.15\textwidth}>{\raggedright\arraybackslash}p{0.22\textwidth}>{\raggedright\arraybackslash}p{0.27\textwidth}>{\raggedright\arraybackslash}p{0.20\textwidth}}
\caption{High-level prompt provenance.}\label{tab:prompt-provenance}\\
\toprule
\textbf{Prompt class} & \textbf{Mathematical target} & \textbf{Useful output} & \textbf{Human role}\\
\midrule
\endfirsthead
\toprule
\textbf{Prompt class} & \textbf{Mathematical target} & \textbf{Useful output} & \textbf{Human role}\\
\midrule
\endhead
Broad prompt 1 & Bounds and formulas for \(\Ccal(G)\) & Order-survival function, threshold characterization, and cyclic examples & Supplied the definition, motivation, and Hamidoune context.\\
\addlinespace
Broad prompt 2 & Counting Chowla subsets of fixed cardinality & Reduction of enumeration to choosing subsets from the elements of order greater than \(m\) & Supplied the fixed-cardinality question and cyclic starting point.\\
\addlinespace
Focused finite-group prompt & Determine \(\Ccal(G)\) from the order distribution & Precise theorem-level reduction using \(N_G(m)\) & Isolated the correct abstraction.\\
\addlinespace
Focused cyclic prompts & Compute \(\Ccal(\Z/n\Z)\), threshold form, and equality with \(\varphi(n)\) & Divisor formulas, threshold construction, equality criterion, and explicit cases & Broke the cyclic problem into tractable subquestions.\\
\addlinespace
Focused finite-abelian prompt & Compute exact order counts in finite abelian groups & \(T_G(d)=\prod_i\gcd(d,n_i)\), M\"obius inversion, and \(p\)-group consequences & Selected the invariant-factor framework.\\
\addlinespace
Focused linear-analogue prompt & Define and bound Chowla subspaces in field extensions & Translation to \([K(a):K]>\dim_KA\) and the \(d_{\max}\) obstruction & Proposed the analogy and later corrected its scope.\\
\addlinespace
Focused finite-field prompts & Count Chowla subspaces through subspace avoidance & Gaussian-binomial formulas and inclusion--exclusion structure & Selected the chain case and first non-chain case.\\
\bottomrule
\end{longtable}

The strongest outputs were produced when the human mathematician identified the right intermediate problem, formulated it with precise notation, and asked for a theorem-level treatment. For each focused prompt, the reasoning configuration generated candidates, reviewed and ranked them, refined promising outputs, and returned retained ideas. The human mathematician then inspected the results, rejected incorrect suggestions, and determined which directions were worth pursuing. The remaining work consisted of validating proposed arguments, filling gaps, correcting overstatements, adding missing hypotheses, and writing the final proofs.

\subsection{Provenance of mathematical ideas}

The central claim of this appendix is not that the AI system independently produced a complete formal paper. The claim is that it produced substantial theorem-level material and several key proof ideas used in the final paper.

Because estimates such as ``approximately 80\% of the useful content came from the AI system'' are necessarily subjective, we do not treat such a percentage as a formal measurement. Instead, Table~\ref{tab:idea-provenance} reports idea-level provenance. Any qualitative percentage should be interpreted as an expert assessment rather than as a quantitative attribution metric.

\begingroup
\footnotesize
\renewcommand{\arraystretch}{1.08}
\setlength{\tabcolsep}{3pt}
\begin{longtable}{>{\raggedright\arraybackslash}p{0.18\textwidth}>{\raggedright\arraybackslash}p{0.22\textwidth}>{\raggedright\arraybackslash}p{0.26\textwidth}>{\raggedright\arraybackslash}p{0.22\textwidth}}
\caption{Provenance of mathematical ideas used in the final paper.}\label{tab:idea-provenance}\\
\toprule
\textbf{Final-paper component} & \textbf{AI output} & \textbf{Role in the final paper} & \textbf{Human contribution}\\
\midrule
\endfirsthead
\toprule
\textbf{Final-paper component} & \textbf{AI output} & \textbf{Role in the final paper} & \textbf{Human contribution}\\
\midrule
\endhead
Definition of Chowla set and \(\Ccal(G)\) & Initial human prompt; AI formalization in broad and focused runs & Sets up the invariant studied throughout the paper & Selected the object, connected it to Chowla-type order conditions, and fixed notation.\\
\addlinespace
Order-distribution reduction & Focused finite-group run deriving \(\Ccal(G)=\max\{m:N_G(m)\ge m\}\) & Becomes Proposition~\ref{prop:order-distribution}, the main structural reduction for finite groups & Checked the proof, simplified the exposition, and aligned notation.\\
\addlinespace
Counting formula for fixed cardinality & Broad enumeration prompt and focused finite-group run & Gives the formula \(\binom{N_G(m)}m\) for the number of Chowla sets of size \(m\) & Verified the exact statement and integrated it with Proposition~\ref{prop:order-distribution}.\\
\addlinespace
Cyclic divisor formula & Focused cyclic-group run using the element-order distribution in \(\Z/n\Z\) & Becomes the formula involving \(\sum_{d\mid n,\,d>m}\varphi(d)\) & Polished the proof and connected it to later arithmetic consequences.\\
\addlinespace
Threshold-form maximum sets & Focused cyclic-group run selecting elements of largest order first & Supports the threshold interpretation of maximum Chowla sets & Adapted the statement to the final notation and precise scope.\\
\addlinespace
Criterion for \(\Ccal(\Z/n\Z)=\varphi(n)\) & Focused cyclic-group equality run & Leads to the criterion involving the least prime divisor \(p_1\) and \(\varphi(n)\ge n/p_1-1\) & Corrected, streamlined, and stated the result in final form.\\
\addlinespace
Prime-power and two-prime corollaries & Focused cyclic-group run & Provides explicit families where equality or an improved bound can be determined & Checked cases, repaired arguments where needed, and organized them as corollaries.\\
\addlinespace
Asymptotic behavior of \(\Ccal(\Z/n\Z)/\varphi(n)\) & AI outputs suggested arithmetic directions and threshold sets & The final paper proves liminf \(1\), infinite limsup, and positive lower-density threshold results & Supplied or substantially repaired the analytic number-theoretic arguments and final rigor.\\
\addlinespace
Finite abelian group order counts & Focused finite-abelian-group run deriving \(T_G(d)=\prod_i\gcd(d,n_i)\) and M\"obius inversion for \(a_G(d)\) & Becomes the finite abelian group formula for \(\Ccal(G)\) & Verified the invariant-factor setup and incorporated standard references.\\
\addlinespace
Abelian \(p\)-group closed form & Focused finite-abelian-group run & Supports the closed form for cyclic and noncyclic abelian \(p\)-groups & Completed and checked the case analysis.\\
\addlinespace
Definition of Chowla subspace & Focused linear-analogue run translating \(|S|\) to \(\dim_KA\) and \(\ord(x)\) to \([K(a):K]\) & Introduces the linear analogue in finite extensions & Judged the analogy mathematically meaningful and fixed the final definition.\\
\addlinespace
Upper bound using \(d_{\max}(L/K)\) & Focused linear-analogue run identifying largest proper intermediate fields as obstructions & Becomes the upper bound \(\Ccal(L/K)\le n-d_{\max}(L/K)\) & Corrected the scope and supplied a rigorous dimension-intersection proof.\\
\addlinespace
Sharpness over infinite base fields & Focused linear-analogue run suggested avoidance of proper intermediate fields & The final paper proves sharpness for finite separable extensions over infinite base fields & Added necessary hypotheses, finite-union arguments, and the final proof.\\
\addlinespace
Finite-field chain counting & Focused finite-field run on subspaces avoiding a fixed subspace & Leads to Gaussian-binomial formulas for chain cases & Selected the final formulation and aligned it with the subfield lattice.\\
\addlinespace
First non-chain finite-field case \(n=pq\) & Focused finite-field run developed subspace-avoidance and inclusion--exclusion ideas & The final paper treats two incomparable proper subfields and gives an explicit formula & Supplied the final construction, corrected details, and completed the inclusion--exclusion argument.\\
\addlinespace
Numerical plots & Not AI-generated & Illustrate the behavior of normalized ratios for cyclic groups & Human and computational contribution.\\
\bottomrule
\end{longtable}
\endgroup

This table records which portions of the final argument were proposed by the system, which were repaired by the human mathematician, and which were supplied independently by the human mathematician.

\clearpage
\subsection{Selected interaction logs: from broad exploration to focused deduction}

To illustrate the collaborative workflow, we provide selected excerpts from the interaction logs. The transcripts have been edited for length to remove routine conversational filler while preserving the explicit mathematical prompts and the theorem-level outputs generated by the AI.

\subsubsection*{1. The extremal invariant and order distribution}

\noindent\textbf{The human intent.} The exploration began with a broad inquiry into the extremal properties of the Chowla-set invariant \(\Ccal(G)\). The human mathematician provided the foundational definition and nudged the system to consider cyclic groups and Euler's totient function as a natural starting point. The goal was to see whether the system could identify a unifying structural parameter governing \(\Ccal(G)\).

\noindent\textbf{The AI contribution.} The system identified that \(\Ccal(G)\) is governed by the group's element-order distribution. It introduced an ``order-survival function'' and established an exact combinatorial characterization.

\begin{tcolorbox}[colback=gray!4,colframe=black!55,breakable]
\textbf{Human prompt.} What are some nontrivial upper and lower bounds for \(\Ccal(G)\) for a given finite group \(G\)? A natural starting point is \(\Ccal(\Z/n\Z)\) for finite cyclic groups. A straightforward lower bound is Euler's totient function.

\medskip
\textbf{AI output (excerpt).}

\textbf{Definition.} Let \(G\) be a finite group. Define the order-survival function \(N_{>k}(G)\) for an integer \(k\ge1\) by
\[
 N_{>k}(G):=|\{g\in G:\ord(g)>k\}|.
\]

\textbf{Lemma [Exact characterization of \(\Ccal(G)\)].} For any finite group \(G\) with \(|G|\ge2\), the maximum size of a Chowla subset is
\[
 \Ccal(G)=\max\{k\in\Z_{\ge1}:N_{>k}(G)\ge k\}.
\]
\end{tcolorbox}

\noindent\textbf{The human synthesis.} The human mathematician verified the proof and recognized this reduction as the correct abstraction for the paper. The notation \(N_{>k}(G)\) was refined to \(N_G(m)\), together with \(\Omega_G(m)\), producing Proposition~\ref{prop:order-distribution}. The system also proposed an exact divisor formula for cyclic groups, which the human authors polished into Theorem~\ref{thm:cyclic-exact}.

\subsubsection*{2. Fixed-cardinality enumeration}

\noindent\textbf{The human intent.} After the maximum-size characterization, the human mathematician asked for the exact number of Chowla subsets of a prescribed size.

\noindent\textbf{The AI contribution.} Building on the previous reduction, the system reduced the enumeration problem to selecting elements from the strict survival set.

\begin{tcolorbox}[colback=gray!4,colframe=black!55,breakable]
\textbf{Human prompt.} For each integer \(n\) with \(1<n<|G|\), determine or estimate the number of Chowla subsets \(S\subseteq G\) of cardinality \(n\).

\medskip
\textbf{AI output (excerpt).}

\textbf{Theorem [Exact enumeration formula].} Let \(G\) be a finite group and let \(n\) satisfy \(1<n<|G|\). If \(\omega_{>n}(G)=|\{g\in G:\ord(g)>n\}|\), then the number \(c_n(G)\) of Chowla subsets of cardinality \(n\) is
\[
 c_n(G)=\binom{\omega_{>n}(G)}{n}.
\]

\emph{Proof excerpt.} Constructing a Chowla subset \(S\) of cardinality \(n\) is mathematically identical to choosing exactly \(n\) distinct elements from the strict survival set \(\Omega_{>n}(G)\).
\end{tcolorbox}

\noindent\textbf{The human synthesis.} The human mathematician confirmed the subset-selection argument and integrated the formula with the order-distribution reduction, completing the counting statement in Proposition~\ref{prop:order-distribution}.

\subsubsection*{3. Isolating the equality case for cyclic groups}

\noindent\textbf{The human intent.} Once \(\varphi(n)\) had been identified as a universal lower bound for \(\Ccal(\Z/n\Z)\), the human mathematician asked precisely which integers achieve equality.

\noindent\textbf{The AI contribution.} The system identified the largest proper divisor, hence the smallest prime factor, as the governing obstruction and derived the boundary condition.

\begin{tcolorbox}[colback=gray!4,colframe=black!55,breakable]
\textbf{Human prompt.} For which integers \(n\ge2\) does the equality \(\Ccal(\Z/n\Z)=\varphi(n)\) hold?

\medskip
\textbf{AI output (excerpt).}

\textbf{Theorem.} Let \(n\ge2\), and let \(p\) be its smallest prime factor. Then \(\Ccal(\Z/n\Z)=\varphi(n)\) if and only if
\[
 \frac np\le \varphi(n)+1.
\]

\emph{Proof excerpt.} The remaining internal sum vanishes exactly when \(n\) has no proper divisor strictly greater than \(\varphi(n)+1\).
\end{tcolorbox}

\noindent\textbf{The human synthesis.} Because \(\varphi(n)\) and \(n/p\) are integers, the condition was streamlined to \(\varphi(n)\ge n/p_1-1\), yielding Proposition~\ref{prop:cyclic-phi-criterion}. The human authors then verified the explicit prime-power and two-prime families in the subsequent corollaries.

\subsubsection*{4. Formulating the linear analogue in field extensions}

\noindent\textbf{The human intent.} After developing the group-theoretic invariant, the human mathematician proposed translating subset cardinality and element order into the language of finite field extensions and vector subspaces.

\noindent\textbf{The AI contribution.} The system constructed a group-to-field dictionary: set cardinality became \(K\)-dimension, while element order became the degree of the generated simple subextension. It formulated the Chowla-subspace condition and identified \(d_{\max}(L/K)\) as the governing obstruction.

\begin{tcolorbox}[colback=gray!4,colframe=black!55,breakable]
\textbf{Human prompt.} What is the appropriate linear analogue of a Chowla set for a finite field extension \(K\subseteq L\)? How large can a Chowla \(K\)-subspace of \(L\) be in terms of the largest proper intermediate field of \(L/K\)?

\medskip
\textbf{AI output (excerpt).}

\textbf{Definition [Chowla subspace].} Let \(K\subseteq L\) be a finite field extension of degree \(n>1\). A nonzero \(K\)-subspace \(A\subseteq L\) is a Chowla subspace if
\[
 [K(a):K]>\dim_KA\qquad\text{for every }0\ne a\in A.
\]

\textbf{Theorem [Universal upper bound].} For any finite field extension \(L/K\),
\[
 \Ccal(L/K)\le n-d_{\max}(L/K).
\]
\end{tcolorbox}

\noindent\textbf{The human synthesis.} The human mathematician judged the structural translation meaningful, verified the geometric dimension-intersection proof, corrected the scope of claims concerning infinite and finite base fields, and wrote the rigorous exposition in Section~\ref{sec:chowla_subspaces_in_finite_extensions}; the upper bound appears as Theorem~\ref{thm:linear-upper-bound}.

\clearpage
\subsection{Limitations}

This appendix reports a single case study, not a benchmark. Several limitations should be emphasized:
\begin{itemize}[leftmargin=2em]
\item The problem was selected and decomposed by a human expert whose mathematical judgment substantially shaped the system's success. The results should not be interpreted as evidence that non-expert users can obtain comparable outputs from vague prompts.
\item The review and ranking stages were model-based heuristic filters, not formal verification procedures. A candidate favored by the system could still contain serious mathematical errors.
\item The boundary between AI-generated content, human repair, and standard mathematical background is not always sharp. Although Tables~\ref{tab:prompt-provenance} and~\ref{tab:idea-provenance} provide concrete provenance records, final attribution necessarily reflects human interpretation of the logs.
\item The problem was chosen to reduce the likelihood of direct literature recombination, but the model may still have drawn on general proof patterns learned during training.
\end{itemize}

The appropriate conclusion is therefore limited but meaningful: in this case study, a compact reasoning configuration of Co-Scientist, guided by expert mathematical prompting, produced substantial proof-grade material for a research-level paper on Chowla sets and subspaces.

\end{document}